\newtheorem{theorem}{Theorem}[section]
\newtheorem{corollary}[theorem]{Corollary}
\newtheorem{lemma}[theorem]{Lemma}
\numberwithin{equation}{section}
\newcommand{\R}{\mathbb{R}}
\newcommand{\N}{\mathbb{N}}
\newcommand{\C}{\mathbb{C}}
\newcommand{\ov}{\overline}
\newcommand{\dis}{\displaystyle}
\newcommand{\supp}{\textup{supp}}
\newcommand{\Notequiv}{/\kern-.6em\hbox{$\equiv$} }
\title[Green potentials and Blaschke products]%
    {Inequalities for sums of Green potentials and Blaschke products}
\author{Igor E. Pritsker}%
\begin{document}


\maketitle



\begin{abstract}

We study inequalities for the infima of Green potentials on a compact subset of an arbitrary domain in the complex plane. The results are based on a new representation of the pseudohyperbolic farthest-point distance function via a Green potential. We also give applications to sharp inequalities for the supremum norms of Blaschke products.

\end{abstract}


\section{Green potentials} \label{sec1}

Let $G\subset\ov{\C}$ be a domain possessing the Green function $g_G(z,\zeta)$ with pole at $\zeta\in G$. For the positive Borel measures $\nu_k, k=1,\ldots,m,$ with compact supports in $G$, define their Green potentials \cite[p. 96]{AG} by
\[
U_G^{\nu_k}(z):=\int g_G(z,\zeta)\,d\nu_k(\zeta),\quad z\in G.
\]
Note that Green potentials are superharmonic and nonnegative in $G$.
Suppose that $\nu:=\sum_{j=1}^m \nu_j$ is a unit measure. We study inequalities of the following type
\begin{align} \label{1.1}
\sum_{k=1}^m \inf_E U_G^{\nu_k} \ge A + B\, \inf_E \sum_{k=1}^m U_G^{\nu_k},
\end{align}
where $E\subset G$ is a compact set of positive logarithmic capacity ${\rm cap}(E)$. Observe that the opposite inequality is always true with $A=0$ and $B=1$:
\[
\sum_{k=1}^m \inf_E U_G^{\nu_k} \le \inf_E \sum_{k=1}^m U_G^{\nu_k},
\]
which suggests that optimal constants in \eqref{1.1} should satisfy $A\le 0$ and $B\le 1.$ This problem was considered for logarithmic potentials in \cite{PS}, where an analog of \eqref{1.1} was proved with constants $B=1$ and $A$ expressed through the farthest-point distance function and potential theoretic quantities. We generalize those ideas to Green potentials and the hyperbolic setting below. Furthermore, we prove a sharp version of \eqref{1.1}, and provide applications to the inequalities for Blaschke products in supremum norms on $E$.

Since $E$ has positive capacity by our assumption, we can consider its Green equilibrium measure $\mu_E^G$ and the minimum Green energy (Robin's constant) $V_E^G$, see \cite[p. 132]{ST}. It is well known that $\mu_E^G$ is a positive unit Borel measure supported on $\partial E$, whose potential satisfies
\begin{align} \label{1.2}
U_G^{\mu_E^G}(z) \le V_E^G,\ z\in G, \quad \mbox{and} \quad U_G^{\mu_E^G}(z) = V_E^G\ \mbox{q.e. on } E,
\end{align}
see Theorem 5.11 \cite[p. 132]{ST}. The equality in \eqref{1.2} holds quasi everywhere (q.e.) on $E$, which means that it holds up to an exceptional set of zero logarithmic capacity. Hence Fubini's theorem gives that
\[
\int U_G^{\mu}(z)\,d\mu_E^G(z) = \int U_G^{\mu_E^G}(z)\,d\mu(z) \le V_E^G
\]
for any positive unit Borel measure $\mu$ supported in $G.$ This immediately implies that
\begin{equation} \label{1.3}
\inf_E U_G^{\mu} \le V_E^G,
\end{equation}
and that for $\nu=\sum_{j=1}^m \nu_j$ we have
\[
\sum_{k=1}^m \inf_E U_G^{\nu_k} \ge 0 \ge -V_E^G + \inf_E \sum_{k=1}^m U_G^{\nu_k}.
\]
While this basic version of \eqref{1.1} may be useful, it is clearly not sharp as equality is never attained in the above inequality. A sharp version of \eqref{1.1} requires more sophisticated tools.

For $z,\zeta$ in the unit disk $D$, consider the pseudohyperbolic metric $\delta_D(z,\zeta)=|z-\zeta|/|1-\bar\zeta z|.$ It has a standard extension to an arbitrary simply connected domain by conformal invariance. However, we make an even more general definition of the pseudohyperbolic metric for any domain $G\subset\ov{\C}$ possessing the Green function $g_G(z,\zeta)$ with pole at $\zeta\in G$:
\[
\delta_G(z,\zeta):= e^{-g_G(z,\zeta)},\quad z,\zeta \in G.
\]
Thus the function $-\log \delta_G(z,\zeta) = g_G(z,\zeta)$ is superharmonic in each variable $z,\zeta \in G$. If we define the farthest-point pseudohyperbolic distance function for a compact set $E\subset G$ by
\[
d_E^G(z) := \sup_{\zeta\in E} \delta_G(z,\zeta),
\]
then the function $-\log d_E^G(z) = \inf_{\zeta\in E} g_G(z,\zeta)$ is superharmonic in $G$, see Theorem 2.4.7 of \cite[p. 38]{Ra}. It is clear from Harnack's inequality applied to the Green function that $-\log d_E^G(z)$ is continuous in $G$, provided $E$ is not a singleton.  Furthermore, we have the following representation (Riesz decomposition) of this superharmonic function as a Green potential.

\begin{theorem} \label{thm1.1}
Let $G\subset\ov{\C}$ be a domain with the Green function $g_G(z,\zeta)$, and let $E \subset G$ be a compact set, which is not a single point. Then
\begin{align} \label{1.4}
-\log d_E^G(z) = \inf_{\zeta\in E} g_G(z,\zeta) = \int g_G(z,\zeta)\, d\sigma_E^G(\zeta), \quad z\in G,
\end{align}
where $\sigma_E^G$ is a positive measure supported on $G$, whose total mass satisfies
\begin{align} \label{1.5}
\sigma_E^G(G)\le 1.
\end{align}

If $G$ is simply connected, then strict inequality holds in \eqref{1.5}, and the support of $\sigma_E^G$ has points of accumulation on $\partial G$. Furthermore, if $E$ is the closure of a Jordan domain $H$, then $\supp\,\sigma_E^G\cap H \neq\emptyset$.
\end{theorem}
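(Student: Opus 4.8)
\emph{Proof strategy.}
I would first handle the representation \eqref{1.4}. The function $u:=-\log d_E^G=\inf_{\zeta\in E}g_G(\cdot,\zeta)$ is, by the facts recalled above, nonnegative, superharmonic, and (since $E$ is not a singleton) continuous on $G$, so the Riesz decomposition theorem on the Greenian domain $G$ gives $u=U_G^{\sigma_E^G}+h$, where $\sigma_E^G\ge0$ is the Riesz measure of $u$ and $h$ is its greatest harmonic minorant. To kill $h$, fix $\zeta_0\in E$: then $0\le u\le g_G(\cdot,\zeta_0)=U_G^{\delta_{\zeta_0}}$, and since $g_G(\cdot,\zeta_0)$ is a potential its greatest harmonic minorant is $0$; as $0$ is a harmonic minorant of $u$, the squeeze $0\le h\le(\text{greatest harmonic minorant of }U_G^{\delta_{\zeta_0}})=0$ forces $h\equiv0$. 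This yields \eqref{1.4} with $\sigma_E^G$ carried by $G$.

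For the mass bound \eqref{1.5} I would argue by exhaustion, and I expect this to be the most delicate point. Choose subdomains $G_1\Subset G_2\Subset\cdots$ with smooth boundaries, $E\subset G_1$ and $\bigcup_nG_n=G$; the previous step applied on $G_n$ gives $u_n:=\inf_{\zeta\in E}g_{G_n}(\cdot,\zeta)=U_{G_n}^{\sigma_n}$. From $u_n\le g_{G_n}(\cdot,\zeta_0)$ the function $w_n:=g_{G_n}(\cdot,\zeta_0)-u_n\ge0$ vanishes continuously on $\partial G_n$, hence has nonpositive outward normal derivative there, and Green's formula gives $2\pi(\sigma_n(G_n)-1)=\int_{\partial G_n}\partial_\nu w_n\le0$, i.e.\ $\sigma_n(G_n)\le1$. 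Since $g_{G_n}\uparrow g_G$, Dini's theorem yields $u_n\uparrow u$ locally uniformly on $G$, so (the masses being uniformly bounded by $1$) $\sigma_n\to\sigma_E^G$ in the weak-$*$ sense on compact subsets of $G$; therefore $\sigma_E^G(G)\le\liminf_n\sigma_n(G_n)\le1$. The simply connected case below will in fact recover $\le 1$ together with strictness at once.

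For simply connected $G$ I would transport everything to $D$ via a Riemann map $\phi$; with $F:=\phi(E)$, conformal invariance of the Green function gives $u\circ\phi^{-1}=\inf_{\eta\in F}g_D(\cdot,\eta)=:v$ and $\sigma_E^G(G)=\sigma_F^D(D)$. A boundary computation, using $g_D(z,\eta)/(1-|z|^2)\to\tfrac12P(\omega,\eta)$ as $z\to\omega\in\partial D$ (uniformly for $\eta$ in the compact set $F$, with $P(\omega,\eta)=(1-|\eta|^2)/|\omega-\eta|^2$) together with the circular-mean identity $\tfrac1{2\pi}\int_0^{2\pi}U_D^{\sigma_F^D}(re^{i\theta})\,d\theta=\int\min\bigl(\log\tfrac1r,\,g_D(0,\zeta)\bigr)\,d\sigma_F^D(\zeta)$ and dominated convergence, yields
\[ \sigma_F^D(D)=\frac1{2\pi}\int_{\partial D}\inf_{\eta\in F}P(\omega,\eta)\,|d\omega|. \]
Since $F$ is not a singleton, picking $\eta_0\ne\eta_1$ in $F$ the kernels $P(\cdot,\eta_0),P(\cdot,\eta_1)$ have the same total mass $2\pi$ but are not identically equal, so $\inf_{\eta}P(\omega,\eta)<P(\omega,\eta_0)$ on a set of positive arclength, which gives $\sigma_F^D(D)<1$, the strict inequality in \eqref{1.5}. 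For the accumulation of $\supp\sigma_E^G$ on $\partial G$ I would argue by contradiction: if $\supp\sigma_E^G$ were compact in $G$ then $\supp\sigma_F^D\cup F\subset\{|z|\le\rho_0\}$ for some $\rho_0<1$, so $v$ is harmonic on the annulus $A=\{\rho_0<|z|<1\}$; fixing $z_1\in A$ and $\eta_1\in F$ with $v(z_1)=g_D(z_1,\eta_1)$, the function $v-g_D(\cdot,\eta_1)$ is harmonic and $\le0$ on $A$ (as $v=\inf_{\eta\in F}g_D(\cdot,\eta)\le g_D(\cdot,\eta_1)$) and vanishes at $z_1$, so the maximum principle on the connected set $A$ forces it to vanish identically; hence $g_D(z,\eta_1)\le g_D(z,\eta)$ for all $z\in A$ and $\eta\in F$, and dividing by $1-|z|^2$, letting $z\to\partial D$ and integrating forces $P(\cdot,\eta)\equiv P(\cdot,\eta_1)$, i.e.\ $F=\{\eta_1\}$, a contradiction.

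Finally, for $E=\ov H$ with $H$ a Jordan domain: since $g_G(z,\cdot)$ is superharmonic on $G$, the function $\zeta\mapsto\delta_G(z,\zeta)=e^{-g_G(z,\zeta)}$ is subharmonic on $G$, so its supremum over $\ov H$ is attained on $\partial H$ and $u(z)=\inf_{\zeta\in\partial H}g_G(z,\zeta)$. If $\supp\sigma_E^G\cap H=\emptyset$, then $u=U_G^{\sigma_E^G}$ is harmonic on $H$; choosing $z_1\in H$ and $\zeta_1\in\partial H$ with $u(z_1)=g_G(z_1,\zeta_1)$, the function $u-g_G(\cdot,\zeta_1)$ is harmonic on $H$ (its pole $\zeta_1$ lies off $H$), is $\le0$ on $H$, and vanishes at $z_1$, so by the maximum principle $u\equiv g_G(\cdot,\zeta_1)$ on $H$; but then $u(z)\to+\infty$ as $z\to\zeta_1$, contradicting the continuity of $u$ at $\zeta_1\in\partial H\subset G$. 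Hence $\supp\sigma_E^G\cap H\ne\emptyset$. The chief obstacle in all of this is the weak-$*$ limit argument of the second paragraph, where one must secure uniform control of the Riesz masses along the exhaustion to push $\eqref{1.5}$ through for a general Greenian domain.
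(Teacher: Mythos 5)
Your argument for the mass bound \eqref{1.5} on a general Greenian domain has a genuine gap, and it sits exactly where the whole difficulty of \eqref{1.5} is concentrated. In the exhaustion step you apply Green's formula to $w_n:=g_{G_n}(\cdot,\zeta_0)-u_n$ on the smooth domain $G_n$ and use the sign of its outward normal derivative on $\partial G_n$. But $u_n$ is merely superharmonic: its Riesz measure $\sigma_n$ may charge every neighbourhood of $\partial G_n$ (indeed the theorem itself asserts that the support accumulates at the boundary when the domain is simply connected), so $w_n$ is only continuous up to $\partial G_n$, its Laplacian is a signed measure, and the pointwise normal derivative on $\partial G_n$ need not exist; the identity $\int_{\partial G_n}\partial_\nu w_n\,ds=2\pi(\sigma_n(G_n)-1)$ is precisely the assertion in question and cannot just be quoted. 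Gauss's theorem on interior subdomains only gives the flux through interior level curves, equal to $2\pi(\sigma_n(K)-1)$, while the sign information you need lives on $\partial G_n$; transferring it requires an additional limiting/averaging argument over a boundary collar (using $0\le w_n$ and $w_n\to 0$ at $\partial G_n$), which you do not supply. Incidentally, you flag the weak-$*$ passage as the chief obstacle, but that part is routine (locally uniform convergence of $u_n$ to $u$ gives distributional convergence of the Laplacians, hence vague convergence of $\sigma_n$, and lower semicontinuity of mass on open sets); the flux identity is the real issue. The paper avoids all of this with a soft argument you could simply adopt: for any regular compact $K\subset G$, integrate $U_G^{\sigma_E^G}\le g_G(\cdot,\zeta_0)$ against the Green equilibrium measure $\mu_K^G$ and use Fubini together with \eqref{1.2} to get $V_K^G\,\sigma_E^G(K)\le\int U_G^{\mu_K^G}\,d\sigma_E^G\le U_G^{\mu_K^G}(\zeta_0)\le V_K^G$, whence $\sigma_E^G(K)\le 1$, and exhaust $G$ by such $K$.

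The rest of your proposal is correct and in places takes a genuinely different route from the paper. Killing the harmonic minorant by squeezing $0\le u\le g_G(\cdot,\zeta_0)$ is a clean alternative to the paper's use of q.e.\ boundary limits and the generalized maximum principle. For simply connected $G$, your boundary formula $\sigma_F^D(D)=\frac{1}{2\pi}\int_{\partial D}\inf_{\eta\in F}P(\omega,\eta)\,|d\omega|$ is valid (the circular mean of $g_D(\cdot,\eta)$ over $|z|=r$ is $\min\bigl(\log\frac1r,\,g_D(0,\eta)\bigr)$, and $g_D(z,\eta)/\log\frac{1}{|z|}\to P(\omega,\eta)$ uniformly for $\eta$ in a compact subset of $D$), and it yields strict inequality at once; it even evaluates the mass exactly and reproduces $(1-r)/(1+r)$ in Corollary \ref{cor1.3}, which is sharper than the paper's Martin-kernel estimate $\sigma_E^G(G)\le 1-\epsilon\delta/\pi$. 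Your maximum-principle arguments for the accumulation of $\supp\,\sigma_E^G$ on $\partial G$ and for the Jordan-domain statement are sound and parallel to the paper's. So once the flux step behind \eqref{1.5} is either repaired or replaced by the equilibrium-measure argument, the proof is complete.
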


If $E=\{\zeta\}\in G$ is a singleton, then we obviously have that $-\log d_E^G(z) = g_G(z,\zeta)$, i.e. $\sigma_E^G=\delta_{\zeta}$ is the unit point mass at $\zeta.$ However, for all nontrivial compact sets $E\subset G$ the total mass $\sigma_E^G(G)$ of the representing measure is less than 1, at least when $G$ is simply connected. We conjecture that strict inequality holds in \eqref{1.5} for all domains $G$ possessing Green functions. This is different from the case of Euclidean metric in $\R^2$, see \cite{Pr1} and \cite{LP}, where the corresponding Riesz measure has unit mass. It turns out that $\sigma_E^G(G)$ depends on $G$ and $E$ in a rather complicated way. We give an explicit example of this measure below. It would be of interest to study relations between the properties of $\sigma_E^G$ and the hyperbolic geometry of $E$ in $G$. The first representation of type \eqref{1.4} for the Euclidean farthest-point distance function appeared in \cite{Bo}, where only finite sets $E$ were considered. The general Euclidean case of an arbitrary compact set $E$ was treated in \cite{Pr1}, and more detailed studies of the Riesz measure were accomplished in \cite{LP} and \cite{GN}.

Theorem \ref{thm1.1} provides important tools for proving a sharp version of \eqref{1.1} of the following form.

\begin{theorem} \label{thm1.2}
Let $E \subset G$ be a compact set, ${\rm cap}(E)>0$. Suppose
that $\nu_k, k=1,\ldots,m,$ are positive Borel measures compactly supported in $G$, such that the total mass of $\sum_{k=1}^m \nu_k$ is equal to 1. We have that
\begin{align} \label{1.6}
\sum_{k=1}^m \inf_E U_G^{\nu_k} \ge C_E^G + \sigma_E^G(G)\, \inf_E \sum_{k=1}^m U_G^{\nu_k},
\end{align}
where
\begin{equation} \label{1.7}
C_E^G := \int \inf_{z\in E} g_G(z,\zeta)\, d\mu_E^G(\zeta) - V_E^G\, \sigma_E^G(G)
\end{equation}
cannot be replaced by a smaller constant independent of $m.$
\end{theorem}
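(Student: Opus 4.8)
The plan is to obtain \eqref{1.6} from Theorem \ref{thm1.1} together with two elementary reductions, and then to exhibit an explicit extremal sequence that forces equality in the limit. Throughout write $\nu:=\sum_{k=1}^m\nu_k$, $\sigma:=\sigma_E^G(G)$, and $h(\zeta):=\inf_{z\in E}g_G(z,\zeta)$; by the symmetry of the Green function $h(\zeta)=-\log d_E^G(\zeta)$, so \eqref{1.4} gives $h=U_G^{\sigma_E^G}$ on $G$. The first reduction is immediate: if $z\in E$ then $g_G(z,\zeta)\ge h(\zeta)$ for all $\zeta\in G$, hence $U_G^{\nu_k}(z)\ge\int h\,d\nu_k$ for $z\in E$, so $\inf_E U_G^{\nu_k}\ge\int h\,d\nu_k$, and summing and using Tonelli's theorem,
\[
\sum_{k=1}^m\inf_E U_G^{\nu_k}\ \ge\ \int h\,d\nu\ =\ \int U_G^{\sigma_E^G}\,d\nu\ =\ \int U_G^{\nu}\,d\sigma_E^G .
\]
Thus \eqref{1.6} will follow from the second reduction: for every positive unit measure $\nu$ with compact support in $G$,
\[
\int U_G^{\nu}\,d\sigma_E^G\ \ge\ C_E^G+\sigma\,\inf_E U_G^{\nu}.
\]

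To prove the second reduction I would sweep $\sigma_E^G$ onto $E$. Let $\widehat\sigma$ be the balayage of $\sigma_E^G$ onto $E$ relative to $G$: it is a positive measure supported on $E$, with total mass $\widehat\sigma(G)\le\sigma_E^G(G)=\sigma$ (mass may be lost to $\partial G$ in the sweeping), and with $U_G^{\widehat\sigma}\le U_G^{\sigma_E^G}$ on $G$ and $U_G^{\widehat\sigma}=U_G^{\sigma_E^G}$ quasi-everywhere on $E$. Since $\mu_E^G$ carries no polar set, the last two properties and Tonelli's theorem give $\int U_G^{\mu_E^G}\,d\sigma_E^G=\int U_G^{\mu_E^G}\,d\widehat\sigma$ and $\int U_G^{\nu}\,d\sigma_E^G\ge\int U_G^{\nu}\,d\widehat\sigma$. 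Subtracting, and then using $U_G^{\nu}\ge\inf_E U_G^{\nu}$ and $U_G^{\mu_E^G}\le V_E^G$ on $E\supseteq\supp\widehat\sigma$ (the latter by \eqref{1.2}),
\[
\int U_G^{\nu}\,d\sigma_E^G-\int U_G^{\mu_E^G}\,d\sigma_E^G\ \ge\ \int\bigl(U_G^{\nu}-U_G^{\mu_E^G}\bigr)\,d\widehat\sigma\ \ge\ \bigl(\inf_E U_G^{\nu}-V_E^G\bigr)\,\widehat\sigma(G)\ \ge\ \bigl(\inf_E U_G^{\nu}-V_E^G\bigr)\,\sigma ,
\]
where the last inequality uses $\inf_E U_G^{\nu}-V_E^G\le 0$, which is exactly \eqref{1.3}, together with $\widehat\sigma(G)\le\sigma$. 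Finally $\int U_G^{\mu_E^G}\,d\sigma_E^G=\int U_G^{\sigma_E^G}\,d\mu_E^G=\int\inf_{z\in E}g_G(z,\zeta)\,d\mu_E^G(\zeta)$, so after rearranging and inserting the definition \eqref{1.7} of $C_E^G$ the second reduction, hence \eqref{1.6}, follows.

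For sharpness, fix $m$ and take $w_1^{(m)},\dots,w_m^{(m)}\in E$ to be Fekete points for the kernel $g_G$, i.e.\ a minimizer of $\sum_{i\ne j}g_G(w_i,w_j)$ over $E^m$ (a finite minimum, since ${\rm cap}(E)>0$), and put $\nu_k:=\frac1m\delta_{w_k^{(m)}}$, so $\nu=\lambda_m:=\frac1m\sum_{k=1}^m\delta_{w_k^{(m)}}$ is a unit measure. Then $\sum_{k=1}^m\inf_E U_G^{\nu_k}=\frac1m\sum_k h(w_k^{(m)})=\int h\,d\lambda_m\to\int h\,d\mu_E^G$, because $\lambda_m\to\mu_E^G$ weak-$*$ (the standard equidistribution of Fekete points) and $h$ is continuous on $E$. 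On the other hand $\inf_E U_G^{\nu}=\min_{z\in E}\frac1m\sum_k g_G(z,w_k^{(m)})\le\int U_G^{\lambda_m}\,d\mu_E^G=\int U_G^{\mu_E^G}\,d\lambda_m\le V_E^G$ by \eqref{1.2}, while the defining minimality of Fekete points yields $\sum_{j\ne i}g_G(z,w_j^{(m)})\ge\sum_{j\ne i}g_G(w_i^{(m)},w_j^{(m)})$ for every $z\in E$ and every $i$, hence $\min_{z\in E}\sum_{j}g_G(z,w_j^{(m)})\ge\frac1m\sum_{i\ne j}g_G(w_i^{(m)},w_j^{(m)})$; dividing by $m$ and using the classical limit $\frac{1}{m(m-1)}\sum_{i\ne j}g_G(w_i^{(m)},w_j^{(m)})\to V_E^G$ gives $\inf_E U_G^{\nu}\to V_E^G$. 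Consequently
\[
\sum_{k=1}^m\inf_E U_G^{\nu_k}-\sigma\,\inf_E U_G^{\nu}\ \longrightarrow\ \int\inf_{z\in E}g_G(z,\zeta)\,d\mu_E^G(\zeta)-\sigma V_E^G\ =\ C_E^G
\]
by \eqref{1.7}, so the constant $C_E^G$ in \eqref{1.6} cannot be improved.

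The step I expect to require care is the balayage argument. Sweeping onto the compact set $E\subset G$ generally does not preserve mass, so only $\widehat\sigma(G)\le\sigma_E^G(G)$ is available; this is precisely why the sign information $\inf_E U_G^{\nu}\le V_E^G$ from \eqref{1.3} is indispensable, and the whole estimate would fail if $\inf_E U_G^{\nu}$ could exceed $V_E^G$. One also needs the exceptional set on $E$ where $U_G^{\widehat\sigma}\neq U_G^{\sigma_E^G}$ to be polar, so that it is negligible for the finite-energy measure $\mu_E^G$ and the identity $\int U_G^{\mu_E^G}\,d\sigma_E^G=\int U_G^{\mu_E^G}\,d\widehat\sigma$ holds. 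The remaining ingredients — the representation $h=U_G^{\sigma_E^G}$ from Theorem \ref{thm1.1}, the uses of Tonelli's theorem (legitimate since $g_G\ge0$ and the relevant integrals are finite because $h$ is bounded on compact subsets of $G$), the continuity of $h$, and the classical facts about Fekete points for the kernel $g_G$ (existence, weak-$*$ convergence of their normalized counting measures to $\mu_E^G$, and the energy limit) — are routine.
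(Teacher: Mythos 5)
Your proof is correct, and its overall architecture (the reduction $\sum_k\inf_E U_G^{\nu_k}\ge\int U_G^{\nu}\,d\sigma_E^G$ via Theorem \ref{thm1.1} and Fubini--Tonelli, a lower bound for $\int U_G^{\nu}\,d\sigma_E^G$, then sharpness via Green--Fekete points) matches the paper's, but the key lemma in the middle step is genuinely different. The paper proves a Bernstein--Walsh-type domination (Lemma \ref{GBW}): $U_G^{\nu}(z)-\inf_E U_G^{\nu}\ge U_G^{\mu_E^G}(z)-V_E^G$ for \emph{all} $z\in G$, obtained from \eqref{1.2}, \eqref{1.3} and the minimum principle applied to the superharmonic function $U_G^{\nu}-U_G^{\mu_E^G}$ on $G\setminus E$; integrating this against $\sigma_E^G$ yields \eqref{1.6} with the constant in the form \eqref{1.8}, which is then converted to \eqref{1.7}. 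You instead sweep $\sigma_E^G$ onto $E$ and use the pointwise bounds only on $E$, compensating for the possible loss of mass $\widehat\sigma(G)\le\sigma_E^G(G)$ by the sign information from \eqref{1.3}; your chain of inequalities is sound (including the direction of $(\inf_E U_G^{\nu}-V_E^G)\widehat\sigma(G)\ge(\inf_E U_G^{\nu}-V_E^G)\,\sigma_E^G(G)$ and the use of the fact that $\mu_E^G$ charges no polar sets), but it imports the balayage machinery for Green kernels (existence for the non-compactly supported $\sigma_E^G$, $U_G^{\widehat\sigma}\le U_G^{\sigma_E^G}$ on $G$, equality q.e.\ on $E$, and the mass inequality), whereas the paper's Lemma \ref{GBW} is short, self-contained, and is reused later in the proof of Theorem \ref{thm2.3}, which your route would not supply. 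In the sharpness part you use the same extremal data as the paper ($\nu_k=\frac1m\delta_{w_k}$ at Fekete points for the Green energy); your derivation of $\inf_E U_G^{\lambda_m}\to V_E^G$ by the replacement trick within a single $m$-tuple is a slight variant of the paper's proof of \eqref{5.2}, and you quote the weak* equidistribution of Green--Fekete points as classical, whereas the paper, lacking a precise reference for the Green kernel, proves it as \eqref{5.3}; adding a sketch or citation for that one fact would make your argument fully self-contained.
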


We mention another equivalent form of the best constant:
\begin{equation} \label{1.8}
C_E^G = \int \left(U_G^{\mu_E^G}(z) - V_E^G\right)\, d\sigma_E^G(z) \le 0,
\end{equation}
where the above inequality is immediate from \eqref{1.2}.

An illustration of Theorems \ref{thm1.1} and \ref{thm1.2} with explicit measure and constants is given in the following.

\begin{corollary} \label{cor1.3}
If $G=D:=\{z:|z|<1\}$ and $E=D_r:=\{z:|z|\le r\},\ 0<r<1,$ then
\begin{equation} \label{1.9}
d\sigma_{D_r}^D(z) = \frac{r(1-r^2)(1-|z|^2)}{2\pi|z|(r|z|+1)^2(r+|z|)^2}\,dxdy,\quad z=x+iy\in D,
\end{equation}
and
\begin{equation} \label{1.10}
\sigma_{D_r}^D(D) = \frac{1-r}{1+r} < 1.
\end{equation}
Furthermore, \eqref{1.6} holds with
\begin{equation} \label{1.11}
C_{D_r}^D = \log\frac{1+r^2}{2r} + \frac{1-r}{1+r}\log r.
\end{equation}
\end{corollary}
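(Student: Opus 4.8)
The plan is to verify Corollary~\ref{cor1.3} by direct computation, exploiting the explicit form of the Green function of the disk and the rotational symmetry of $E = D_r$. First I would record that $g_D(z,\zeta) = \log|1-\bar\zeta z| - \log|z-\zeta| = -\log\delta_D(z,\zeta)$, so that for $E = D_r$ the farthest-point function is $d_{D_r}^D(z) = \sup_{|\zeta|\le r}\delta_D(z,\zeta)$. Since $\delta_D(z,\zeta)$ is, for fixed $z$, a subharmonic function of $\zeta$ that depends on $\zeta$ in a Möbius fashion, the supremum over the disk $|\zeta|\le r$ is attained on the circle $|\zeta| = r$, and by symmetry the extremal $\zeta$ lies on the ray through $z$; a one-variable optimization gives $\delta_D(z,\zeta)$ maximized at $\zeta = -r z/|z|$ (the point of $\partial D_r$ "antipodal" to $z$), yielding the closed form
\begin{equation*}
d_{D_r}^D(z) = \frac{|z| + r}{1 + r|z|},
\end{equation*}
hence $-\log d_{D_r}^D(z) = \log(1+r|z|) - \log(|z|+r)$. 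One should double-check the endpoint behavior: as $|z|\to 1$ this tends to $0$ (consistent with $g_D$ vanishing on $\partial D$), and at $z=0$ it equals $-\log r > 0$.

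Next I would obtain the representing measure $\sigma_{D_r}^D$ as the Riesz measure of the superharmonic function $u(z) := -\log d_{D_r}^D(z)$, i.e.\ $d\sigma_{D_r}^D = -\frac{1}{2\pi}\Delta u\, dx\, dy$ in the sense of distributions on $D$ (this is exactly the Riesz decomposition underlying Theorem~\ref{thm1.1}, together with the fact that $u$ has boundary values $0$ q.e.\ on $\partial D$, so no harmonic part survives). Writing $u(z) = h(|z|)$ with $h(t) = \log(1+rt) - \log(t+r)$, I apply the radial Laplacian $\Delta = h''(t) + \frac1t h'(t)$ and simplify. A short computation gives $h'(t) = \frac{r}{1+rt} - \frac{1}{t+r} = \frac{r(t+r) - (1+rt)}{(1+rt)(t+r)} = \frac{r^2-1}{(1+rt)(t+r)}$, and differentiating again and assembling $h'' + h'/t$ should collapse to a multiple of $\frac{(1-t^2)}{t(1+rt)^2(t+r)^2}$; matching constants produces precisely the density in \eqref{1.9} after substituting $t=|z|$. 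The one subtlety is that $u$ is only $C^1$-but-not-$C^2$ across $z=0$ in the naive sense (the radial substitution introduces an apparent $1/|z|$ singularity), so I would confirm there is no extra point mass at the origin: either by checking that $\int_{|z|<\varepsilon} d\sigma_{D_r}^D(z)\to 0$, or by verifying directly via the mean-value property that $u(0)$ equals the spherical average of $u$ on small circles plus the logarithmic potential of the computed density — both confirm $\sigma_{D_r}^D(\{0\}) = 0$. I expect this check, rather than the bulk calculus, to be the main obstacle.

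Then \eqref{1.10} follows by integrating \eqref{1.9}: passing to polar coordinates, $\sigma_{D_r}^D(D) = \int_0^1 \frac{r(1-r^2)(1-t^2)}{t(1+rt)^2(t+r)^2}\, dt$ (the angular integration contributes the factor absorbed into the normalization; with the $dx\,dy = t\,dt\,d\theta$ the $1/|z|$ cancels $t$ and the $2\pi$ cancels), and a partial-fraction decomposition of the rational integrand in $t$ integrates to $\frac{1-r}{1+r}$. As a sanity check one can compare with the general bound $\sigma_E^G(G) \le 1$ of \eqref{1.5} and note $\frac{1-r}{1+r}<1$ strictly, and that the mass tends to $1$ as $r\to 0^+$ (degenerating to the point mass $\delta_0$) and to $0$ as $r\to 1^-$, both geometrically natural.

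Finally, for \eqref{1.11} I specialize the formula \eqref{1.7} for $C_E^G$. Here $E = D_r$ is a disk centered at the origin, its Green equilibrium measure $\mu_{D_r}^D$ is the normalized arclength on $|\zeta| = r$, and the Robin constant is $V_{D_r}^D = -\log\delta_D(r,0)\cdot(\text{appropriate normalization})$; concretely, since $U_D^{\mu_{D_r}^D}(z) = \int g_D(z,\zeta)\,d\mu_{D_r}^D(\zeta)$ is radial, harmonic off $|z|=r$, equal to $g_D(z,0)$-type behavior, one computes $U_D^{\mu_{D_r}^D}(0) = -\log r = V_{D_r}^D$ (the value on $\overline{D_r}$). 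For the first term of \eqref{1.7}, $\inf_{z\in E} g_D(z,\zeta) = -\log d_{D_r}^D(\zeta) = \log(1+r|\zeta|) - \log(|\zeta|+r)$, which on $\supp\mu_{D_r}^D = \{|\zeta|=r\}$ equals $\log(1+r^2) - \log(2r)$, a constant; so $\int \inf_{z\in E} g_D(z,\zeta)\,d\mu_{D_r}^D(\zeta) = \log\frac{1+r^2}{2r}$. Combining, $C_{D_r}^D = \log\frac{1+r^2}{2r} - (-\log r)\cdot\frac{1-r}{1+r} = \log\frac{1+r^2}{2r} + \frac{1-r}{1+r}\log r$, which is \eqref{1.11}. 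The sign is consistent with \eqref{1.8}: since $\frac{1+r^2}{2r}\ge 1$ its log is $\ge 0$ while $\frac{1-r}{1+r}\log r \le 0$, and one checks the sum is $\le 0$ (equality only in the limit $r\to 1$), as required.
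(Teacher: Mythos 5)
Your proposal is correct and follows essentially the same route as the paper: identify the extremal point $\zeta=-rz/|z|$ to get $-\log d_{D_r}^D(z)=\log\frac{1+r|z|}{|z|+r}$, take $-\frac{1}{2\pi}\Delta$ of this radial function to obtain the density \eqref{1.9}, integrate in polar coordinates for \eqref{1.10}, and plug $V_{D_r}^D=-\log r$ and the normalized arclength measure on $|\zeta|=r$ into \eqref{1.7} for \eqref{1.11}. Two minor points: your displayed mass integral keeps a stray $1/t$ that your own parenthetical remark already cancels against the Jacobian, and the worry about a point mass at the origin is resolved at once since $u(0)=-\log r<\infty$ rules out any atom of the Riesz measure there (the paper simply computes the distributional Laplacian, whose $1/|z|$ singularity is integrable).
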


We remark that analogs of Theorems \ref{thm1.1} and \ref{thm1.2} can be proved in $\R^n,\ n\ge 3,$ see \cite{AG} and \cite{La} for the corresponding theory of Green potentials.

\section{Blaschke products} \label{sec2}

We use a well known connection of Blaschke products with Green potentials of discrete measures in the unit disk $D$. Given a finite Blaschke product of degree $n$
\[
B(z)=e^{i\theta} \prod_{j=1}^{n} \frac{z-z_j}{1-\bar z_j\, z}, \quad \{z_j\}_{j=1}^n \subset D,
\]
define the normalized counting measure in the zeros of $B$ by
\[
\nu := \frac{1}{n} \sum_{j=1}^{n} \delta_{z_j}, \quad k=1,\ldots,n,
\]
and observe that
\[
U_G^{\nu}(z) = - \frac{1}{n} \log |B(z)|, \quad k=1,\ldots,n.
\]
This simple idea yields several interesting applications of general results stated in the previous section. Let $\|\cdot\|_E$ be the supremum norm on a compact set $E\subset D$.

\begin{theorem} \label{thm2.1}
Let $E \subset D$ be a compact set of positive capacity. If $B_k,\ k=1,\ldots,m,$ are finite Blaschke products of the corresponding degrees $n_k,$ then
\begin{align} \label{2.1}
\prod_{k=1}^m \| B_k \|_E \le e^{-nC_E^D} \left\| \prod_{k=1}^m B_k \right\|_E^{\sigma_E^D(D)},
\end{align}
where $n=\sum_{k=1}^m n_k$. The constant
\begin{align} \label{2.2}
C_E^D:=\int \inf_{z\in E} \log \left| \frac{1-\bar{\zeta}z}{z-\zeta} \right| \, d \mu_E^D(\zeta) - V_E^D\,\sigma_E^D(D)
\end{align}
cannot be replaced by a smaller value independent of $m.$
\end{theorem}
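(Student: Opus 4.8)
The plan is to reduce Theorem \ref{thm2.1} to Theorem \ref{thm1.2} via the standard dictionary between Blaschke products and Green potentials of discrete measures in $D$. First I would fix finite Blaschke products $B_k$ of degrees $n_k$ with zero sets $\{z_{k,j}\}_{j=1}^{n_k}\subset D$, and introduce the \emph{unnormalized} zero-counting measures $\tilde\nu_k := \sum_{j=1}^{n_k}\delta_{z_{k,j}}$, so that $U_D^{\tilde\nu_k}(z) = -\log|B_k(z)|$ since $g_D(z,\zeta) = \log|(1-\bar\zeta z)/(z-\zeta)|$. To apply Theorem \ref{thm1.2} I need a unit total mass, so I would set $\nu_k := \tilde\nu_k/n$ where $n = \sum_{k=1}^m n_k$; then $\sum_{k=1}^m\nu_k$ is a positive unit Borel measure compactly supported in $D$, and $U_D^{\nu_k} = -\frac1n\log|B_k|$, $\sum_{k=1}^m U_D^{\nu_k} = -\frac1n\log\bigl|\prod_{k=1}^m B_k\bigr|$.

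Next I would translate the infima. Since $\inf_E U_D^{\nu_k} = -\frac1n\sup_E\log|B_k| = -\frac1n\log\|B_k\|_E$ and likewise $\inf_E\sum_k U_D^{\nu_k} = -\frac1n\log\bigl\|\prod_k B_k\bigr\|_E$, inequality \eqref{1.6} of Theorem \ref{thm1.2} (applied with $G=D$) reads
\begin{align*}
-\frac1n\sum_{k=1}^m\log\|B_k\|_E \ge C_E^D - \frac{\sigma_E^D(D)}{n}\log\Bigl\|\prod_{k=1}^m B_k\Bigr\|_E.
\end{align*}
Multiplying by $-n$ reverses the inequality and gives $\sum_{k=1}^m\log\|B_k\|_E \le -nC_E^D + \sigma_E^D(D)\log\bigl\|\prod_k B_k\bigr\|_E$; exponentiating yields exactly \eqref{2.1}. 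I would also note that \eqref{2.2} is just \eqref{1.7} written out with $g_D(z,\zeta) = \log|(1-\bar\zeta z)/(z-\zeta)|$, and that all quantities are finite (the Blaschke products are finite, so the potentials are bounded on the compact set $E$ away from the finitely many zeros, and $C_E^D \le 0$ by \eqref{1.8}).

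For the sharpness claim — that $C_E^D$ cannot be replaced by a smaller constant independent of $m$ — I would extract from the proof of sharpness in Theorem \ref{thm1.2} the approximating sequence of measures: the extremal configurations there are (or can be taken to be) discrete measures with rational masses, which after clearing denominators correspond precisely to zero sets of finite Blaschke products. Concretely, if $\{\nu_k^{(\ell)}\}$ is a sequence of unit measures realizing equality in \eqref{1.6} in the limit, one approximates each $\nu_k^{(\ell)}$ weak-$*$ by discrete measures with atoms of mass $1/n_\ell$ at points of $D$; the resulting Blaschke products $B_k^{(\ell)}$ then make the ratio of the two sides of \eqref{2.1} tend to $1$, forcing optimality of the exponent $-nC_E^D$. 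The main obstacle I anticipate is precisely this sharpness step: one must verify that the extremal (or near-extremal) measures produced in the proof of Theorem \ref{thm1.2} can be chosen, or approximated, by measures that are exactly normalized counting measures in zero sets of Blaschke products, with control on the potentials so that the infima over $E$ converge — the translation of the inequality itself is routine bookkeeping with signs and the factor $n$.
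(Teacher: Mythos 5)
Your reduction of \eqref{2.1} to Theorem \ref{thm1.2} is exactly the paper's argument: normalize the zero-counting measures by the total degree $n$, observe $U_D^{\nu_k}=-\frac1n\log|B_k|$ and $\inf_E U_D^{\nu_k}=-\frac1n\log\|B_k\|_E$, apply \eqref{1.6} with $G=D$, and exponentiate; and \eqref{2.2} is indeed \eqref{1.7} with $g_D(z,\zeta)=\log|(1-\bar\zeta z)/(z-\zeta)|$ written out. That half of your proposal is complete and matches the paper.

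The genuine gap is in the sharpness half, and it is precisely the obstacle you flag at the end. Your plan is to take a general near-extremal sequence of measures for \eqref{1.6} and approximate it weak-$*$ by atomic measures with masses $1/n_\ell$; but weak-$*$ convergence by itself does not control the quantities $\inf_E U_D^{\nu_k}$ (Green potentials are only lower semicontinuous), so as written the claim that ``the ratio of the two sides of \eqref{2.1} tends to $1$'' is unsupported. The resolution is that no approximation step is needed at all: in the paper's proof of sharpness for Theorem \ref{thm1.2} the extremal configuration is already atomic, namely $\nu_{k,n}=\frac1n\delta_{\xi_{k,n}}$ where $\{\xi_{k,n}\}_{k=1}^n\subset E$ are the $n$th Green--Fekete points of $E$. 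Each such measure is exactly the normalized zero-counting measure of a degree-one Blaschke factor $b_{k,n}(z)=(z-\xi_{k,n})/(1-\bar\xi_{k,n}z)$, so one takes $m=n$ and $B_k=b_{k,n}$. Asymptotic equality in \eqref{2.1} then follows from two specific facts about Fekete points (Lemma \ref{Fe}): first, \eqref{5.2} gives $\inf_{z\in E}\frac1n\sum_{k}g_D(z,\xi_{k,n})\to V_E^D$, i.e. $\|\prod_k b_{k,n}\|_E^{1/n}\to e^{-V_E^D}$; second, \eqref{5.3} together with the continuity of $\inf_{z\in E}g_D(z,\cdot)=-\log d_E^D$ gives $\frac1n\sum_k\inf_{z\in E}g_D(z,\xi_{k,n})\to\int\inf_{z\in E}g_D(z,\zeta)\,d\mu_E^D(\zeta)$, i.e. the behavior of $\prod_k\|b_{k,n}\|_E^{1/n}$. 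Without this identification of the extremal sequence (or some equivalent control on the infima under discretization), the sharpness assertion in Theorem \ref{thm2.1} is not yet proved; with it, your outline closes and coincides with the paper's argument.
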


The above result is a generalization of Theorem 2.1 of \cite{Pr1} on products of polynomials in uniform norms. Such inequalities for products of polynomials have been studied since at least 1930s, see \cite{Pr1}, \cite{Pr2} and \cite{PR} for history and references. However, \eqref{2.1} appears to be the first inequality of this type for Blaschke products. An example with explicit constants, for $E$ being a concentric disk, is given below.

\begin{corollary} \label{cor2.2}
If $E=D_r:=\{z:|z|\le r\},\ 0<r<1,$ in Theorem \ref{thm2.1}, then
\begin{equation} \label{2.3}
\prod_{k=1}^m \| B_k \|_{D_r} \le \left(\frac{2 r^{2r/(r+1)}}{1+r^2}\right)^n \ \left\| \prod_{k=1}^m B_k \right\|_{D_r}^{\frac{1-r}{1+r}}.
\end{equation}
\end{corollary}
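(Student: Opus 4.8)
The plan is to derive \eqref{2.3} by specializing Theorem \ref{thm2.1} to the concentric disk $E=D_r$ and substituting the explicit quantities furnished by Corollary \ref{cor1.3}. First I would note that $D_r$ is a compact subset of $D$ (since $r<1$) of positive capacity, so Theorem \ref{thm2.1} applies and gives
\[
\prod_{k=1}^m \| B_k \|_{D_r} \le e^{-nC_{D_r}^D}\, \left\| \prod_{k=1}^m B_k \right\|_{D_r}^{\sigma_{D_r}^D(D)}, \qquad n=\sum_{k=1}^m n_k .
\]
Here the constant $C_{D_r}^D$ of \eqref{2.2} is the same as the constant in \eqref{1.7} for $G=D$, because the Green function of the disk is $g_D(z,\zeta)=\log\left|(1-\bar\zeta z)/(z-\zeta)\right|$; hence its value is given by \eqref{1.11}.

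Next I would insert the data from Corollary \ref{cor1.3}. By \eqref{1.10} the exponent on the norm of the product equals $\sigma_{D_r}^D(D)=(1-r)/(1+r)$, which is already the exponent displayed in \eqref{2.3}. To simplify the prefactor, I would use \eqref{1.11} to write
\[
e^{-nC_{D_r}^D} = \left(\frac{2r}{1+r^2}\right)^n r^{-n(1-r)/(1+r)} ,
\]
and then collect the powers of $r$ by means of the identity $1-(1-r)/(1+r)=2r/(1+r)$, so that $r^n\, r^{-n(1-r)/(1+r)}=r^{2nr/(1+r)}$ and therefore
\[
e^{-nC_{D_r}^D} = \left(\frac{2\, r^{2r/(r+1)}}{1+r^2}\right)^n .
\]
Substituting this into the displayed inequality yields \eqref{2.3}.

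There is no genuine obstacle once Corollary \ref{cor1.3} is in hand: the proof is just the substitution above together with the remark that the Green-potential form \eqref{1.7} and the Blaschke-product form \eqref{2.2} of the constant agree for $G=D$. The only point requiring attention is the elementary manipulation of the exponential, a routine computation I would check but not dwell on. (If one wanted a self-contained argument, one could bypass Corollary \ref{cor1.3}: the rotational symmetry of the pair $(D,D_r)$ renders $d_{D_r}^D$, its representing measure $\sigma_{D_r}^D$, and the Green equilibrium data $\mu_{D_r}^D$, $V_{D_r}^D$ all explicit, and \eqref{2.3} follows by the same substitution.)
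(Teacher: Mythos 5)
Your proposal is correct and matches the paper's proof, which likewise obtains \eqref{2.3} by combining Theorem \ref{thm2.1} with the explicit data of Corollary \ref{cor1.3}; the exponential simplification $e^{-nC_{D_r}^D}=\left(2\,r^{2r/(r+1)}/(1+r^2)\right)^n$ is exactly the routine computation the paper leaves to the reader.
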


The constant $C^D_E$ defined in \eqref{2.2} is asymptotically sharp, i.e., it cannot be decreased so that \eqref{2.1} still holds true for all Blaschke products as specified in Theorem \ref{thm2.1}. However, it is of interest to find an exact constant in \eqref{2.3}, which is {\em attained for each} $n\in\N$. Such a constant was found in Mahler's inequality for products of polynomials, cf. \cite{KP}. In fact, there are arrays of Blaschke products for which the constant $C_E^D$ is attained asymptotically in the sense of \eqref{2.4} below. It is a natural problem to study and characterize such extremal arrays of Blaschke products. The following two theorems address the problem by essentially stating that extremal arrays of Blaschke products are described by the equidistribution of their zeros according to the Green equilibrium measure for $E$ in $D$.

\begin{theorem} \label{thm2.3}
Let $E \subset D$ be a compact set, ${\rm cap}(E)>0$. Suppose that $B_{k,l},\ k=1,\ldots,m_l,$ are finite Blaschke products of the corresponding degrees $n_{k,l}$, and set $n_l:=\sum_{k=1}^{m_l} n_{k,l}$. Let $\tau_{n_l}$ be the normalized counting measure in the zeros of the product $\prod_{k=1}^{m_l} B_{k,l}$. If we have an array of Blaschke products $B_{k,l}$ that satisfies
\begin{align} \label{2.4}
\lim_{n_l\to\infty} \left( \frac{\prod_{k=1}^{m_l} \| B_{k,l} \|_E}{\left\| \prod_{k=1}^{m_l} B_{k,l} \right\|_E^{\sigma_E^D(D)}} \right)^{1/n_l} = e^{-C_E^D},
\end{align}
where $C_E^D$ is defined by \eqref{2.2}, then
\begin{align} \label{2.5}
\lim_{n_l\to\infty} \left\| \prod_{k=1}^{m_l} B_{k,l} \right\|_E^{1/n_l} = e^{-V_E^D}.
\end{align}
Furthermore, if \eqref{2.4} holds and one of the following conditions is satisfied:\\
(i) $E$ is a compact set with empty interior, and $D\setminus E$ is connected;\\
(ii) $E=\ov{H},$ where $H$ is a Jordan domain;\\
then the measures $\tau_{n_l}$ converge to $\mu_E^D$ in the weak* topology. \end{theorem}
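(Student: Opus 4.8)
The plan is to extract from the limit relation \eqref{2.4} enough information to pin down the asymptotic behaviour of each factor, and then invoke standard potential-theoretic lower envelope arguments. First I would translate \eqref{2.4} into potential language using the identification $U_D^{\nu_{k,l}} = -\frac{1}{n_{k,l}}\log|B_{k,l}|$: writing $p_{k,l} := n_{k,l}/n_l$, the measures $\mu_l := \sum_k p_{k,l}\,\nu_{k,l}$ are unit measures with $\tau_{n_l}=\mu_l$, and \eqref{2.4} says that
\begin{align*}
\frac{1}{n_l}\sum_{k=1}^{m_l}\log\|B_{k,l}\|_E - \sigma_E^D(D)\,\frac{1}{n_l}\log\Bigl\|\prod_k B_{k,l}\Bigr\|_E \longrightarrow C_E^D,
\end{align*}
i.e. $-\sum_k \inf_E U_D^{\nu_{k,l}} + \sigma_E^D(D)\,\inf_E U_D^{\mu_l} \to C_E^D$. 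Comparing with \eqref{1.6}, which gives $\sum_k \inf_E U_D^{\nu_{k,l}} \ge C_E^D + \sigma_E^D(D)\,\inf_E U_D^{\mu_l}$, we see that \eqref{2.4} forces asymptotic equality in Theorem \ref{thm1.2}. I would then revisit the proof of Theorem \ref{thm1.2} (which, using \eqref{1.4}, rests on the chain $\sum_k\inf_E U_D^{\nu_k} \ge \sum_k\int U_D^{\nu_k}\,d\mu_E^D - (\text{slack}) \ge \int U_D^{\mu}\,d\sigma_E^D + \ldots$) and identify which inequalities must become equalities in the limit. The crucial one is $\inf_E U_D^{\mu_l} \ge V_E^D - o(1)$ together with $\inf_E U_D^{\mu_l} \le V_E^D$ from \eqref{1.3}; this yields $\inf_E U_D^{\mu_l}\to V_E^D$, which is exactly \eqref{2.5} after exponentiating.

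For the second part, I would argue by weak* compactness: any subsequence of $\tau_{n_l}$ has a further subsequence converging weak* to some unit measure $\tau$ supported on $\ov E$ (the zeros lie in $D$, but their limiting distribution could a priori escape; on $E$ with empty interior or $E=\ov H$ the relevant mass argument keeps it on $\partial E$ or controlled). By the principle of descent, $\inf_E U_D^{\tau} \le \liminf \inf_E U_D^{\tau_{n_l}} = V_E^D$, while by the lower-envelope theorem (applied off a set of capacity zero, hence q.e. on $E$) $U_D^{\tau} = \lim U_D^{\tau_{n_l}}$ q.e. on $E$, so $U_D^{\tau} \ge V_E^D$ q.e. on $E$. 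Combined with $U_D^{\tau}\le$ its own sup and the characterization \eqref{1.2} of $\mu_E^D$ as the unique unit measure on $E$ (resp.\ on $\partial E$) whose potential equals $V_E^D$ q.e. on $E$, the uniqueness of the Green equilibrium measure forces $\tau = \mu_E^D$. Since every subsequential weak* limit equals $\mu_E^D$, the whole sequence converges. The hypotheses (i) and (ii) enter precisely to guarantee that the limit measure $\tau$ cannot place mass on the ``wrong'' part of the plane: in case (i) the connectedness of $D\setminus E$ and emptiness of the interior prevent $\tau$ from having mass in $D\setminus E$ that would be invisible to the potential on $E$; in case (ii) the last clause of Theorem \ref{thm1.1}, $\supp\sigma_E^D\cap H\neq\emptyset$, is what makes the balayage/domination estimates tight enough to conclude.

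The main obstacle I anticipate is the second step — upgrading ``$\inf_E U_D^{\mu_l}\to V_E^D$ and $\mu_l$ is asymptotically extremal in Theorem \ref{thm1.2}'' to ``$\mu_l\to\mu_E^D$ weak*''. The subtlety is that $\inf_E U_D^{\mu}=V_E^D$ does \emph{not} by itself characterize $\mu=\mu_E^D$ (for instance, balayage onto $E$ of many measures achieves this), so one genuinely needs the full force of the asymptotic equality in \eqref{1.6}, not merely \eqref{2.5}; tracing the equality cases back through the Riesz representation \eqref{1.4} and the Fubini step that produced \eqref{1.7}--\eqref{1.8} is where the argument must be most careful, and it is exactly here that conditions (i) and (ii) — via the support properties of $\sigma_E^D$ from Theorem \ref{thm1.1} — are indispensable. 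A secondary technical point is justifying the lower-envelope theorem for Green potentials (rather than logarithmic ones), but this is available in the standard references \cite{ST}, \cite{AG} cited in the paper.
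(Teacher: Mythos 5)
Your overall framework (translate \eqref{2.4} into the statement that the measures $\tau_{n_l}$ are asymptotically extremal in Theorem \ref{thm1.2}, then use weak* compactness, the lower envelope theorem and descent) is the right scaffolding, but the two steps that actually carry the theorem are missing. For \eqref{2.5} you assert that tracing equality cases yields the ``crucial'' bound $\inf_E U_D^{\tau_{n_l}} \ge V_E^D - o(1)$; no inequality of that form occurs anywhere in the proof of Theorem \ref{thm1.2}, and it does not follow from \eqref{2.4} by soft manipulation. The only cheap lower bound available from \eqref{2.4} together with $\sum_k \inf_E U_D^{\nu_{k,l}} \le \inf_E U_D^{\tau_{n_l}}$ is $(1-\sigma_E^D(D))\,\inf_E U_D^{\tau_{n_l}} \ge C_E^D + o(1)$, and $C_E^D/(1-\sigma_E^D(D))$ is in general strictly smaller than $V_E^D$ (test it on $E=D_r$ using Corollary \ref{cor1.3}); so you have in effect assumed the conclusion. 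The actual identification of the limit $C$ of $\|\prod_k B_{k,l}\|_E^{1/n_l}$ requires real work: one first passes to the individual Blaschke factors (so that the first inequality in the chain of the proof of Theorem \ref{thm1.2} becomes an identity and $\frac{1}{n_l}\sum_j \log\|b_{j,l}\|_E$ can be computed from $\tau_{n_l}\stackrel{*}{\rightarrow}\tau$ by continuity of $\log d_E^D$), then combines Lemma \ref{GBW}, Lemma \ref{LowEnv} and the principle of domination to obtain $U_D^{\tau}(z)+\log C \ge U_D^{\mu_E^D}(z)-V_E^D$ in $D$ with equality on $\supp\,\sigma_E^D$, and finally uses the fact from Theorem \ref{thm1.1} that $\supp\,\sigma_E^D$ accumulates on $\partial D$, where both Green potentials tend to zero, to read off $\log C=-V_E^D$. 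This boundary-accumulation step is indispensable and absent from your sketch.

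The second part has the same defect in a sharper form. Nothing in your argument constrains $\supp\,\tau$ to lie in $E$ (the zeros may cluster anywhere in $\ov{D}$), and the properties you do extract --- $\tau$ a unit measure with $U_D^{\tau}\ge V_E^D$ q.e.\ on $E$ --- do not characterize $\mu_E^D$: for $E=D_r$ the unit point mass at $0$, or the uniform measure on a circle $|z|=\rho<r$, satisfies both, so the appeal to ``uniqueness via \eqref{1.2}'' breaks down. The mechanism that actually works is the one you postpone: from the equality analysis one has $U_D^{\tau}\ge U_D^{\mu_E^D}$ in $D$ with equality on $\supp\,\sigma_E^D$; applying the minimum principle to $U_D^{\tau}-U_D^{\mu_E^D}$ in the component $\Omega_E$ of $D\setminus E$ adjacent to $\partial D$ (which meets $\supp\,\sigma_E^D$) and, in case (ii), in $H$ (using $\supp\,\sigma_E^D\cap H\neq\emptyset$ from Theorem \ref{thm1.1}), propagates the equality of the two potentials to all of $D$ --- this is exactly where hypotheses (i) and (ii) enter, since they guarantee $D$ is exhausted by $E$ and these components. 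Equality of potentials throughout $D$ then forces $\supp\,\tau\subset\partial E$, and the energy bound $\iint g_D\,d\tau\,d\tau\le V_E^D$ with uniqueness of the energy minimizer gives $\tau=\mu_E^D$. Since you explicitly defer ``tracing the equality cases'' and the role of $\supp\,\sigma_E^D$ to future care, the proposal as written identifies where the difficulty lies but does not resolve it.
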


We also have a partial converse for the latter theorem.

\begin{theorem} \label{thm2.4}
Let $E \subset D$ be a regular compact set. Suppose that
\[
B_{k,n}(z)=e^{i\theta_{k,n}} \frac{z-z_{k,n}}{1-{\bar z_{k,n}}z},\quad k=1,\ldots,n,\ n\in\N,
\]
are Blaschke factors. Let $\tau_n$ be the normalized counting measure in the zeros of the product $\prod_{k=1}^n B_{k,n}$. If the measures $\tau_n$ converge to $\mu_E^D$ in the weak* topology as $n\to\infty,$ then
\begin{align} \label{2.6}
\lim_{n\to\infty} \left( \frac{\prod_{k=1}^n \| B_{k,n} \|_E}{\left\| \prod_{k=1}^n B_{k,n} \right\|_E^{\sigma_E^D(D)}} \right)^{1/n} = e^{-C_E^D},
\end{align}
where $C_E^D$ is defined by \eqref{2.2}.
\end{theorem}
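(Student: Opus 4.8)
The plan is to establish \eqref{2.6} as a fairly direct consequence of the weak* convergence $\tau_n\to\mu_E^D$, by translating the ratio in \eqref{2.6} into potential-theoretic language via the dictionary $U_D^{\tau_n}(z)=-\frac1n\log|\prod_k B_{k,n}(z)|$ that was set up at the start of Section~\ref{sec2}. Taking logarithms, \eqref{2.6} is equivalent to the two statements
\begin{align*}
\lim_{n\to\infty}\frac1n\sum_{k=1}^n\log\|B_{k,n}\|_E &= -\int\inf_{z\in E}\log\left|\frac{1-\bar\zeta z}{z-\zeta}\right|d\mu_E^D(\zeta)+V_E^D\,\sigma_E^D(D),\\
\lim_{n\to\infty}\frac1n\log\left\|\prod_{k=1}^n B_{k,n}\right\|_E &= -V_E^D,
\end{align*}
since combining them with coefficients $1$ and $-\sigma_E^D(D)$ yields exactly $-C_E^D$ by \eqref{2.2}. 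So the proof splits into controlling the numerator and the denominator separately.

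For the denominator, I would argue as in the proof of \eqref{2.5} in Theorem~\ref{thm2.3}: the quantity $\frac1n\log\|\prod_k B_{k,n}\|_E = -\inf_E U_D^{\tau_n}$, and one shows $\inf_E U_D^{\tau_n}\to V_E^D$. The lower bound $\liminf\inf_E U_D^{\tau_n}\ge V_E^D$ does not hold pointwise but on average against $\mu_E^D$ one has $\int U_D^{\tau_n}d\mu_E^D=\int U_D^{\mu_E^D}d\tau_n\to\int U_D^{\mu_E^D}d\mu_E^D = V_E^D$ (using weak* convergence and the continuity of $U_D^{\mu_E^D}$ guaranteed by regularity of $E$, together with \eqref{1.2}); combined with the principle of descent / upper semicontinuity of the energy and regularity of $E$ one deduces $\inf_E U_D^{\tau_n}\to V_E^D$. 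For the numerator, the single-factor term is $\frac1n\log\|B_{k,n}\|_E = -\frac1n\inf_E g_D(\cdot,z_{k,n})$, so $\frac1n\sum_k\log\|B_{k,n}\|_E = -\int\inf_{z\in E}g_D(z,\zeta)\,d\tau_n(\zeta)$. Since $E$ is not a point, the function $\zeta\mapsto\inf_{z\in E}g_D(z,\zeta)=-\log d_E^D(\zeta)$ is continuous on $D$ (Harnack, as remarked after Theorem~\ref{thm1.1}), so weak* convergence gives directly
\[
\frac1n\sum_{k=1}^n\log\|B_{k,n}\|_E \longrightarrow -\int\inf_{z\in E}g_D(z,\zeta)\,d\mu_E^D(\zeta),
\]
which is the first displayed limit above. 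Assembling the two pieces gives \eqref{2.6}.

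The main obstacle is the lower semicontinuity issue for the denominator: proving $\liminf_{n}\inf_E U_D^{\tau_n}\ge V_E^D$ is not automatic because infima over $E$ interact badly with weak* limits (the potentials $U_D^{\tau_n}$ can dip low at exceptional points). The fix is the standard one in this circle of ideas (and presumably already used for \eqref{2.5}): use that $E$ is \emph{regular}, so $U_D^{\mu_E^D}\equiv V_E^D$ on all of $E$ and is continuous on $D$; then the lower bound follows from the lower envelope theorem (Theorem in Saff–Totik) which says $\liminf_n U_D^{\tau_n}(z) = \liminf_n U_D^{\tau_n}(z)$ holds q.e.\ to the potential of the limit measure, or more simply from a minimum-principle/balayage comparison of $U_D^{\tau_n}$ with $U_D^{\mu_E^D}$ on $D\setminus E$. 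The upper bound $\limsup\inf_E U_D^{\tau_n}\le V_E^D$ is the easy direction from \eqref{1.3} applied to $\tau_n$ (or from the averaging identity). One should also note that a potential subtlety — whether the $\liminf$ in the lower envelope theorem can be strengthened to a genuine limit of the infima — is precisely where regularity of $E$ is needed, so this hypothesis in the statement is not cosmetic. Once the denominator limit \eqref{2.5} is in hand, everything else is continuity plus weak* convergence, and the proof concludes by arithmetic with \eqref{2.2}.
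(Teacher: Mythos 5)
Your treatment of the numerator is exactly the paper's: $\frac1n\sum_k\log\|B_{k,n}\|_E=\int\log d_E^D\,d\tau_n$, and continuity of $\log d_E^D$ plus weak* convergence gives the limit $-\int\inf_{z\in E}g_D(z,\zeta)\,d\mu_E^D(\zeta)$. The gap is in the denominator, and specifically in the one inequality you yourself single out as the main obstacle, namely $\liminf_n\inf_E U_D^{\tau_n}\ge V_E^D$ (equivalently $\limsup_n\frac1n\log\|\prod_kB_{k,n}\|_E\le -V_E^D$). The tools you propose as the ``fix'' do not deliver it. The Lower Envelope Theorem only gives $\liminf_n U_D^{\tau_n}(z)=U_D^{\mu_E^D}(z)$ for quasi every \emph{fixed} $z$; since $\inf_E U_D^{\tau_n}\le U_D^{\tau_n}(z)$ for each $z$, this yields $\liminf_n\inf_E U_D^{\tau_n}\le\inf_E\liminf_n U_D^{\tau_n}$, i.e.\ an inequality in the wrong direction, and it says nothing about the moving points where the infimum is attained (which may, for each $n$, lie in a region where $U_D^{\tau_n}$ dips, or in the exceptional polar set). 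Likewise, a minimum-principle/balayage comparison on $D\setminus E$ in the spirit of Lemma~\ref{GBW} bounds $\inf_E U_D^{\tau_n}$ from \emph{above} in terms of $U_D^{\tau_n}-U_D^{\mu_E^D}$, not from below; to run a minimum principle on $D\setminus E$ you would already need the lower bound on $\partial E$ that you are trying to prove. The argument that actually works — and the one the paper uses — is the Principle of Descent: pick $z_n\in E$ with $U_D^{\tau_n}(z_n)=\inf_E U_D^{\tau_n}$ (lower semicontinuity), pass to a subsequence with $z_n\to z_0\in E$, and conclude $\liminf_n U_D^{\tau_n}(z_n)\ge U_D^{\mu_E^D}(z_0)=V_E^D$, the last equality being exactly where regularity of $E$ enters. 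You mention ``the principle of descent'' in passing in your plan, but your write-up never sets up the minimizing points and subsequence, and your concluding paragraph replaces it by the two inadequate tools above, so the key step is not established.

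Structurally your plan also proves more than the paper does. The paper does not establish the two-sided limit $\inf_E U_D^{\tau_n}\to V_E^D$: it proves only the descent inequality above, deduces $\liminf$ of the ratio in \eqref{2.6} is at least $e^{-C_E^D}$, and gets the opposite inequality for free from the already-proved bound \eqref{2.1} of Theorem~\ref{thm2.1}. Your decomposition into two separate limits is workable (your easy direction $\limsup_n\inf_E U_D^{\tau_n}\le V_E^D$ via \eqref{1.3} is fine), but it stands or falls with the descent step; alternatively you could drop the second limit entirely and, as the paper does, invoke \eqref{2.1} for the upper bound on the ratio.
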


All results of this section may be easily transplanted into the upper half-plane $\Pi_+$, by using the standard conformal mapping $\phi(z)=(z-i)/(z+i)$ of $\Pi_+$ onto $D$. Note that finite Blaschke products on $\Pi_+$ take the following form:
\[
B(z)=e^{i\theta} \prod_{j=1}^{n} \frac{z-z_j}{z-\bar z_j}, \quad \{z_j\}_{j=1}^n \subset \Pi_+.
\]

\section{Proofs} \label{secP}

\begin{proof}[of Theorem \ref{thm1.1}]
It is clear from the definition that
\begin{align*}
-\log d_E^G(z) = \inf_{\zeta\in E} g_G(z,\zeta).
\end{align*}
Since $g_G(z,\zeta)$ is superharmonic in $G$ as a function of $z$, which holds for every $\zeta\in E,$ we have that $u(z) := \inf_{\zeta\in E} g_G(z,\zeta)$ is a superharmonic function in $G$ by Theorem 2.4.7 of \cite[p. 38]{Ra}. The Green function $g(z,\zeta)$ is positive for any $z\in G$ and any $\zeta\in E.$ Furthermore, we have for its limiting boundary values that
\[
\lim_{z\to\xi} g(z,\zeta) = 0
\]
for any $\zeta\in E$ and quasi every $\xi\in\partial G,$ see \cite{AG}, \cite{Ra} and \cite{ST}. Hence $u(z)$ is a nonnegative function in $G$ that satisfies
\[
\lim_{z\to\xi} u(z) = 0
\]
for quasi every $\xi\in\partial G.$ The generalized Maximum Principle for harmonic functions now implies that the greatest harmonic minorant of $u(z)$ in $G$ is the function $h(z)\equiv 0.$ We now conclude by Corollary 4.4.7 of \cite[p. 108]{AG} (or by Theorem $1.24^\prime$ of \cite[p. 109]{La}) that
\[
u(z)=\int g_G(z,\zeta)\, d\sigma_E^G(\zeta) = U_G^{\sigma_E^G}(z), \quad z\in G,
\]
where $\sigma_E^G$ is the Riesz representation measure supported on $G$.

Let $K\subset G$ be a regular compact set. Then we have from \eqref{1.2} (see also \cite[p. 132]{ST}) that
\[
U_G^{\mu_K^G}(z) = V_K^G, \quad z\in K.
\]
Using Fubini's theorem, we obtain that
\[
\int U_G^{\sigma_E^G}(z)\,d\mu_K^G(z) = \int U_G^{\mu_K^G}(z)\, d\sigma_E^G(z) \ge \int_K U_G^{\mu_K^G}(z)\, d\sigma_E^G(z) = V_K^G \sigma_E^G(K).
\]
On the other hand,
\[
U_G^{\sigma_E^G}(z) = u(z) = \inf_{\zeta\in E} g_G(z,\zeta) \le g_G(z,\zeta),\quad \zeta\in E,
\]
which implies by \eqref{1.2} that
\[
V_K^G \sigma_E^G(K) \le \int U_G^{\sigma_E^G}(z)\,d\mu_K^G(z) \le \int g_G(z,\zeta)\,d\mu_K^G(z) = U_G^{\mu_K^G}(\zeta) \le V_K^G.
\]
It immediately follows that
\[
\sigma_E^G(K) \le 1
\]
for any regular compact subset $K$ of $G,$ and \eqref{1.5} holds because $G$ can be exhausted by such subsets.

We now assume that $G$ is simply connected, and prove the rest of statements in this theorem. Since $E$ is not a singleton, it contains at least two distinct points $\zeta_0$ and $\zeta_1$. Consider a conformal mapping $\phi$ of $G$ onto the unit disk $D$, such that $\phi(\zeta_0)=0$ and $\phi(\zeta_1)=a\in(0,1).$ Recall that
\[
g_G(z,\zeta)=g_D(\phi(z),\phi(\zeta))=\log \frac{|1-\ov{\phi(\zeta)}\phi(z)|}{|\phi(z)-\phi(\zeta)|},\quad z,\zeta\in G.
\]
Hence $d_E^G(z) = d_{\phi(E)}^D(\phi(z)),\ z\in G,$ and all Green potentials satisfy corresponding conformal invariance property. Let $D_r:=\{w:|w|\le r\},\ 0<r<1,$ and consider $K_r:=\phi^{-1}(D_r).$ It is clear that $K_r\subset G$ is a compact set such that $E\subset K_r$ for $r$ close to 1. Recall that the Martin kernel of $D$ relative to the origin is defined by $M(w,t):=g_D(w,t)/g_D(0,t),\ w,t\in D,$ see Chapter 8 of \cite{AG}. It is known that the limiting boundary values of the Martin kernel in the unit disk coincide (up to a constant) with those of the Poisson kernel:
\[
\lim_{t\to e^{i\theta}} M(w,t) = \frac{1-|w|^2}{|e^{i\theta} - w|^2},\quad w\in D,\ \theta\in[0,2\pi),
\]
see Example 8.1.9 in \cite{AG}. In particular, we obtain that
\[
\lim_{t\to -1} M(a,t) = \lim_{t\to -1} g_D(a,t)/g_D(0,t) = \frac{1-a}{1+a} < 1,
\]
which means that there are $\epsilon,\delta>0$ such that
\[
\inf_{t\in\phi(E)} g_D(w,t) \le g_D(w,a) \le (1-\epsilon) g_D(w,0),\quad |w+1|<\delta.
\]
Using conformal invariance of Green potentials, we obtain from Example 5.13 of \cite[p. 133]{ST} that $d\mu_{K_r}^G(\phi^{-1}(re^{i\theta})) = d\theta/(2\pi)$ and $V_{K_r}^G = -\log r.$ We now let $w=re^{i\theta}=\phi(z)$, and estimate for $r$ sufficiently close to 1 that
\begin{align*}
\int \inf_{\zeta\in E} g_G(z,\zeta)\,d\mu_{K_r}^G(z) &= \dis\int_0^{2\pi} \inf_{t\in\phi(E)} g_D(w,t) \frac{d\theta}{2\pi} \le \int_{|w+1|<\delta} g_D(w,a) \frac{d\theta}{2\pi} + \int_{|w+1|\ge\delta} g_D(w,0) \frac{d\theta}{2\pi} \\ &\le (1-\epsilon) \int_{|\theta-\pi|<\delta} g_D(re^{i\theta},0) \frac{d\theta}{2\pi} + \int_{|\theta-\pi|\ge\delta} g_D(re^{i\theta},0) \frac{d\theta}{2\pi} \\ &= \left(1 - \frac{\epsilon\delta}{\pi}\right) \log\frac{1}{r} = \left(1 - \frac{\epsilon\delta}{\pi}\right) V_{K_r}^G.
\end{align*}
Arguing as in the proof of \eqref{1.5}, we obtain that
\begin{align*}
\int \inf_{\zeta\in E} g_G(z,\zeta)\,d\mu_{K_r}^G(z) &=
\int U_G^{\sigma_E^G}(z)\,d\mu_{K_r}^G(z) = \int U_G^{\mu_{K_r}^G}(z)\, d\sigma_E^G(z) \\ &\ge \int_{K_r} U_G^{\mu_{K_r}^G}(z)\, d\sigma_E^G(z) = V_{K_r}^G \sigma_E^G(K_r)
\end{align*}
 by Fubini's theorem and \eqref{1.2}. It follows that
\[
V_{K_r}^G \sigma_E^G(K_r) \le \left(1 - \frac{\epsilon\delta}{\pi}\right) V_{K_r}^G
\]
for all $r\in(0,1)$ sufficiently close to 1. Consequently, we obtain that $\sigma_E^G(G) \le \left(1 - \epsilon\delta/\pi\right)<1$ after letting $r\to 1.$

Recall our notation
\begin{align*}
u(z) = -\log d_E^G(z) = \inf_{\zeta\in E} g_G(z,\zeta) = U_G^{\sigma_E^G}(z),\quad z\in G.
\end{align*}
For any $z\in G,$ the above infimum is attained on $E$ at a point $\zeta_z$:
\[
u(z) = \inf_{\zeta\in E} g_G(z,\zeta) = g_G(z,\zeta_z),\quad z\in G,
\]
by lower semicontinuity of $g_G(z,\cdot).$ Note that $u(z)$ is harmonic in $G\setminus \supp\,\sigma_E^G,$ and let $A$ be any connected component of $G\setminus \supp\,\sigma_E^G.$ Given a fixed $z\in A$, we have that
\[
u(z) = g_G(z,\zeta_z)\quad \mbox{and}\quad u(t) \le g_G(t,\zeta_z),\quad t\in G.
\]
Hence $v(t):=u(t)-g_G(t,\zeta_z)$ is a subharmonic function that attains its maximum in $A$ at $z$, which implies that $u(t)=g_G(t,\zeta_z)$ for all $t\in A$ by the Maximum Principle. Consider $E=\ov{H},$ where $H$ is a Jordan domain. If we assume that $\supp\,\sigma_E^G \cap H = \emptyset,$ then there exists $\zeta\in\ov{H}$ such that $u(t)=g_G(t,\zeta),\ t\in H.$ But $\lim_{t\to\zeta} u(t) = \lim_{t\to\zeta} g_G(t,\zeta) = \infty,$ which means that $u(\zeta)=\infty$ by superharmonicity of $u$. This is clearly impossible, and we conclude that $\supp\,\sigma_E^G \cap H \neq \emptyset.$

If we assume that $\supp\,\sigma_E^G$ is a compact subset of $G$, then we can find $r\in(0,1)$ such that $\supp\,\sigma_E^G$ is contained in the interior of  $K_r,$ and also $E \subset K_r.$ Hence the argument from the previous paragraph gives $\zeta\in E$ such that $u(z)=g_G(z,\zeta)$ for all $z\in\partial K_r.$ We thus obtain that
\[
\int u(z)\,d\mu_{K_r}^G(z) = \int g_G(z,\zeta)\,d\mu_{K_r}^G(z) = U_G^{\mu_{K_r}^G}(\zeta) = V_{K_r}^G
\]
by \eqref{1.2} and the regularity of $\partial K_r.$ On the other hand, we have \begin{align*}
\int u(z)\,d\mu_{K_r}^G(z) = \int U_G^{\sigma_E^G}(z)\,d\mu_{K_r}^G(z) = \int U_G^{\mu_{K_r}^G}(z)\, d\sigma_E^G(z) = V_{K_r}^G \sigma_E^G(K_r) < V_{K_r}^G.
\end{align*}
This contradiction implies that $\supp\,\sigma_E^G$ must have limit points on $\partial G.$
\end{proof}

We need the following analog of Bernstein-Walsh estimate for polynomials (cf. Theorem 5.5.7 of \cite[p. 156]{Ra}).

\begin{lemma} \label{GBW}
For any unit Borel measure $\mu$ with compact support in $G$, we have
\[
U_G^{\mu}(z) - \inf_E U_G^{\mu} \ge U_G^{\mu_E^G}(z) - V_E^G, \quad z\in G.
\]
\end{lemma}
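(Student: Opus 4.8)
The plan is to establish the equivalent form
\[
U_G^{\mu_E^G}(z) - U_G^{\mu}(z) \le V_E^G - \inf_E U_G^{\mu}, \qquad z\in G,
\]
by applying the generalized Maximum Principle to the function $s(z):=U_G^{\mu_E^G}(z)-U_G^{\mu}(z)$ on the open set $\Omega:=G\setminus E$. On $\Omega$ this function is subharmonic: $U_G^{\mu_E^G}$ is harmonic off $\supp\,\mu_E^G=\partial E$, hence harmonic on $\Omega$, while $-U_G^{\mu}$ is subharmonic on all of $G$ as the negative of a superharmonic function, and a sum of a harmonic and a subharmonic function is subharmonic. Moreover $s$ is bounded above on $G$, since $U_G^{\mu_E^G}\le V_E^G$ by \eqref{1.2} and $U_G^{\mu}\ge 0$, so that $s\le V_E^G<\infty$. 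Write $M:=V_E^G-\inf_E U_G^{\mu}$, which is nonnegative by \eqref{1.3}.

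The next step is to control the boundary behaviour of $s$ on $\partial\Omega$, which (as a subset of $\ov{\C}$) is contained in $\partial E\cup\partial G$. For $\xi\in\partial E\subseteq E$, the bound $U_G^{\mu_E^G}\le V_E^G$ — valid at every point, with no exceptional set — together with the lower semicontinuity of $U_G^{\mu}$, which gives $\liminf_{z\to\xi}U_G^{\mu}(z)\ge U_G^{\mu}(\xi)\ge\inf_E U_G^{\mu}$, yields $\limsup_{z\to\xi}s(z)\le V_E^G-\inf_E U_G^{\mu}=M$. For $\xi\in\partial G$, I would use that Green potentials of compactly supported measures vanish at quasi every boundary point (the same property of $g_G(\cdot,\zeta)$ invoked in the proof of Theorem~\ref{thm1.1}, now applied to $\mu_E^G$), so that $\lim_{z\to\xi}U_G^{\mu_E^G}(z)=0$ for q.e.\ $\xi\in\partial G$, whence $\limsup_{z\to\xi}s(z)\le 0\le M$ at such points. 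Thus $\limsup_{z\to\xi}s(z)\le M$ for quasi every $\xi\in\partial\Omega$, the exceptional set being a polar subset of $\partial G$.

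Since $s$ is subharmonic and bounded above on $\Omega$, the generalized Maximum Principle (the version allowing a polar exceptional set on the boundary, as used above for Theorem~\ref{thm1.1}) gives $s\le M$ on $\Omega$. On $E$ itself the inequality is immediate, because $s(z)=U_G^{\mu_E^G}(z)-U_G^{\mu}(z)\le V_E^G-\inf_E U_G^{\mu}=M$ for every $z\in E$. Hence $s\le M$ on $G=\Omega\cup E$, which rearranges to the assertion of the lemma.

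The only delicate point I anticipate is the boundary estimate at $\partial G$: one needs the fact that Green potentials of measures with compact support in $G$ tend to $0$ at quasi every point of $\partial G$, and one must invoke the maximum principle in a form that permits a polar exceptional boundary set (applied, if necessary, componentwise on $\Omega$, whose complement in $\ov{\C}$ is non-polar). The remaining steps are routine; note also that when $\mu$ has finite Green energy the same inequality follows at once from the domination principle for Green potentials, but the argument sketched here needs no energy hypothesis on $\mu$.
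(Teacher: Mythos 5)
Your proof is correct and follows essentially the same route as the paper: the paper applies the Minimum Principle to the superharmonic function $h(z)=U_G^{\mu}(z)-U_G^{\mu_E^G}(z)$ on $G\setminus E$, using \eqref{1.2} and lower semicontinuity of $U_G^{\mu}$ at $\partial E$, the q.e.\ vanishing of Green potentials on $\partial G$, and \eqref{1.3}, then checks the inequality on $E$ directly — exactly your argument with the sign reversed. Your handling of the $\partial G$ boundary (using only $U_G^{\mu}\ge 0$ rather than its boundary limit) and the explicit note on boundedness above are fine minor variants, not a different approach.
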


\begin{proof}
Consider the function $h(z):=U_G^{\mu}(z) - U_G^{\mu_E^G}(z),$ which is superharmonic in $G\setminus E$. Superharmonicity of $U_G^{\mu}(z)$ and \eqref{1.2} imply that
\[
\liminf_{z\to\zeta} h(z) \ge U_G^{\mu}(\zeta) - V_E^G \ge \inf_E U_G^{\mu} - V_E^G, \quad\zeta\in E.
\]
We also have for the boundary values of $h(z)$ on $\partial G$ that
\[
\lim_{z\to\zeta} h(z) = 0 \quad\mbox{for q.e. }\zeta\in\partial G
\]
by Theorem 5.1(iv) \cite[p. 124]{ST}. Recall from \eqref{1.3} that
\[
\inf_E U_G^{\mu} \le V_E^G.
\]
Hence the Minimum Principle for superharmonic functions gives that
\[
h(z) \ge \inf_E U_G^{\mu} - V_E^G \quad\mbox{for all }z\in G\setminus E.
\]
But the same inequality also holds on $E$ by \eqref{1.2}.
\end{proof}

We shall also use points minimizing discrete Green energy, which are usually called Fekete points. More precisely, an $n$-tuple of points $\{\xi_{k,n}\}_{k=1}^n\subset E$ is called the $n$th Fekete points of $E$ if
\[
\inf_{\{z_k\}_{k=1}^n\subset E} \sum_{1\le j<k\le n} g_G(z_j,z_k) = \sum_{1\le j<k\le n} g_G(\xi_{j,n},\xi_{k,n}),
\]
where the infimum is taken over all $n$-tuples $\{z_k\}_{k=1}^n\subset E.$ The following facts are well known.

\begin{lemma} \label{Fe}
Let $\{\xi_{k,n}\}_{k=1}^n$ be the $n$th Fekete points of a compact set $E\subset G.$ Then the discrete energies of Fekete points are monotonically increasing to $V_E^G$:
\begin{align} \label{5.1}
\frac{2}{n(n-1)} \sum_{1\le j<k\le n} g_G(\xi_{j,n},\xi_{k,n}) \nearrow V_E^G \quad \mbox{as } n\to\infty,
\end{align}
and
\begin{align} \label{5.2}
\lim_{n\to\infty} \inf_{z\in E} \frac{1}{n} \sum_{k=1}^n g_G(z,\xi_{k,n}) = V_E^G.
\end{align}
Furthermore, if cap$(E)>0$ then the normalized counting measures in Fekete points converge to the Green equilibrium measure in the weak* topology:
\begin{align} \label{5.3}
\tau_n:= \frac{1}{n} \sum_{k=1}^n \delta_{\xi_{k,n}} \stackrel{*}{\rightarrow} \mu_E^G \mbox{ as }n\to\infty.
\end{align}
\end{lemma}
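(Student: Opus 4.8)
The plan is to adapt the classical arguments for Fekete points of the logarithmic kernel (see \cite{ST}, \cite{La}), with the diagonal singularity of $g_G$ handled by truncation; recall that ${\rm cap}(E)>0$ makes $E$ infinite, so the Fekete energies $W_n$ are finite and the Fekete points distinct. Write $\delta_n:=W_n/\binom{n}{2}$. First I would establish the two elementary facts. Deleting the $i$th point from an optimal $(n+1)$-tuple leaves an admissible $n$-tuple of energy $\ge W_n$; summing these $n+1$ inequalities and observing that each pair is retained in exactly $n-1$ of them gives $(n-1)W_{n+1}\ge(n+1)W_n$, i.e.\ $\delta_{n+1}\ge\delta_n$. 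For the upper bound, integrate $\sum_{j<k}g_G(z_j,z_k)$ against the product measure $(\mu_E^G)^{\otimes n}$; since $\mu_E^G$ has finite energy and $U_G^{\mu_E^G}=V_E^G$ quasi-everywhere on $E$ by \eqref{1.2}, this yields $W_n\le\binom{n}{2}\,V_E^G$, hence $\delta_n\le V_E^G$. In particular $\delta_n$ increases to some finite limit $\delta_\infty\le V_E^G$.

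Next I would pass to a weak* limit. The $\tau_n$ lie in the weak* compact set of unit measures on $E$, so fix a subsequence $\tau_{n_k}\stackrel{*}{\to}\mu$. For $M>0$ the truncation $g_M:=\min(g_G,M)$ is bounded and continuous on $E\times E$, so $\iint g_M\,d\tau_{n_k}\,d\tau_{n_k}\to\iint g_M\,d\mu\,d\mu$; peeling off the diagonal contribution $M/n_k$ and using $g_M\le g_G$ shows $\iint g_M\,d\tau_{n_k}\,d\tau_{n_k}\le\frac{n_k-1}{n_k}\,\delta_{n_k}+\frac{M}{n_k}$, so in the limit $\iint g_M\,d\mu\,d\mu\le\delta_\infty$. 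Letting $M\to\infty$, monotone convergence gives $\iint g_G\,d\mu\,d\mu\le\delta_\infty$, while $\iint g_G\,d\mu\,d\mu\ge V_E^G$ because $\mu$ is a unit measure on $E$. Combined with $\delta_\infty\le V_E^G$ this forces $\delta_\infty=V_E^G=\iint g_G\,d\mu\,d\mu$, which is \eqref{5.1}; moreover $\mu$ now minimizes the Green energy among unit measures on $E$, so $\mu=\mu_E^G$ by uniqueness of the equilibrium measure. Every subsequential limit of $\tau_n$ being $\mu_E^G$, we obtain \eqref{5.3}.

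For \eqref{5.2}, adjoining an arbitrary $z\in E$ to the $n$th Fekete set gives an admissible $(n+1)$-tuple, so $\sum_{k=1}^n g_G(z,\xi_{k,n})\ge W_{n+1}-W_n$ for every $z\in E$; and $W_{n+1}-W_n=\tfrac{n}{2}\bigl((n+1)\delta_{n+1}-(n-1)\delta_n\bigr)\ge n\,\delta_n$ by the monotonicity above, so $\inf_{z\in E}\tfrac{1}{n}\sum_{k=1}^n g_G(z,\xi_{k,n})\ge\delta_n\to V_E^G$. The reverse inequality comes from integrating in $z$ against $\mu_E^G$: the infimum is at most $\tfrac{1}{n}\sum_{k=1}^n U_G^{\mu_E^G}(\xi_{k,n})\le V_E^G$ by \eqref{1.2}. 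This gives \eqref{5.2}.

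The one genuinely delicate point is the diagonal singularity of $g_G$: since $\iint g_G\,d\tau_n\,d\tau_n=+\infty$, one is forced into the truncation maneuver (truncate, take the weak* limit, then send $M\to\infty$) rather than arguing directly with lower semicontinuity of the energy. The only external input is the uniqueness of $\mu_E^G$ as the Green-energy minimizer when ${\rm cap}(E)>0$, which I would quote from \cite{ST} or \cite{La}; and if ${\rm cap}(E)=0$ then $V_E^G=+\infty$ and the lower-bound arguments already show that both quantities in \eqref{5.1} and \eqref{5.2} tend to $+\infty$.
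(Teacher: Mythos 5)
Your proposal is correct, and for \eqref{5.2} and \eqref{5.3} it is essentially the paper's own argument: the same adjoin-a-point inequality combined with the monotonicity of the normalized Fekete energies gives the lower bound in \eqref{5.2} (the upper bound being \eqref{1.3}, which you rederive by integrating against $\mu_E^G$), and the same truncation $\min(g_G,M)$, weak* compactness, Monotone Convergence, and uniqueness of the Green-energy minimizer give \eqref{5.3}. The one genuine difference is \eqref{5.1}: the paper does not prove it but quotes it from the literature (Ransford and Landkof for the logarithmic and Riesz cases, Choquet for a general result covering Green kernels), whereas you prove it directly --- monotonicity via the point-deletion argument, the bound $\delta_n\le V_E^G$ by integrating the discrete energy against $(\mu_E^G)^{\otimes n}$, and the identification $\delta_\infty=V_E^G$ by feeding the truncation estimate back into the energy lower bound for the weak* limit. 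This makes the lemma self-contained at essentially no extra cost, since the weak* machinery is needed for \eqref{5.3} anyway; the paper's route simply outsources \eqref{5.1} and then uses it as input. Your remarks on the degenerate case ${\rm cap}(E)=0$ (where $V_E^G=+\infty$) are also consistent with the statement, which only needs positive capacity for \eqref{5.3}.
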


\begin{proof}
The result of \eqref{5.1} is standard, and can be found in many books, see \cite[p. 153]{Ra} for logarithmic potentials, and see \cite[p. 160]{La} for Riesz potentials.  A very general result of this kind that covers Green potentials was obtained in \cite{Ch}.

Since we do not have precise references for \eqref{5.2} and \eqref{5.3}, we sketch their proofs here. Note that $\tau_n$ is a unit measure, so that \eqref{1.3} gives
\[
\inf_{z\in E} \frac{1}{n} \sum_{k=1}^n g_G(z,\xi_{k,n}) = \inf_{z\in E} U_G^{\tau_n}(z) \le V_E^G.
\]
On the other hand, we have for the $(n+1)$-tuple $(z,\xi_{1,n},\ldots,\xi_{n,n})\subset E$ that
\begin{align*}
\sum_{1\le j<k\le n+1} g_G(\xi_{j,n+1},\xi_{k,n+1}) \le \sum_{k=1}^n g_G(z,\xi_{k,n}) + \sum_{1\le j<k\le n} g_G(\xi_{j,n},\xi_{k,n})
\end{align*}
by the extremal property of Fekete points. Hence monotonicity in \eqref{5.1} yields that
\begin{align*}
\sum_{k=1}^n g_G(z,\xi_{k,n}) &\ge \frac{n(n+1)}{n(n+1)}\sum_{1\le j<k\le n+1} g_G(\xi_{j,n+1},\xi_{k,n+1}) - \sum_{1\le j<k\le n} g_G(\xi_{j,n},\xi_{k,n})  \\ &\ge \frac{n(n+1)}{n(n-1)}\sum_{1\le j<k\le n} g_G(\xi_{j,n},\xi_{k,n}) - \sum_{1\le j<k\le n} g_G(\xi_{j,n},\xi_{k,n}) \\
&= \frac{2}{(n-1)} \sum_{1\le j<k\le n} g_G(\xi_{j,n},\xi_{k,n}),
\end{align*}
which immediately implies that
\[
V_E^G \ge \inf_{z\in E} \frac{1}{n} \sum_{k=1}^n g_G(z,\xi_{k,n}) \ge \frac{2}{n(n-1)} \sum_{1\le j<k\le n} g_G(\xi_{j,n},\xi_{k,n}) \to V_E^G \quad \mbox{as } n\to\infty.
\]
Thus \eqref{5.2} is proved.

Since $\tau_n,\ n\in\N,$ is a sequence of positive unit Borel measures supported on $E$, we can use weak* compactness and assume that $\tau_n \stackrel{*}{\rightarrow} \tau$ as $n\to\infty$ along a subsequence $N\subset\N$. It is clear that $\tau$ is a positive unit Borel measure supported on $E$, and that $\tau_n\times\tau_n$ converges to $\tau\times\tau$ in the weak* topology along the same subsequence. Let $K_M(z,\zeta) := \min\left(g_G(z,\zeta),M\right).$ Then $K_M(z,\zeta)$ is a continuous function in $z$ and $\zeta$, and $K_M(z,\zeta)$ increases to
$g_G(z,\zeta)$ as $M\to\infty.$ Using the Monotone Convergence Theorem, we obtain for the Green energy of $\tau$ that
\begin{align*}
\iint g_G(z,\zeta)\,d\tau(z)\,d\tau(\zeta) &=
\lim_{M\to\infty} \left( \lim_{N \ni n\to\infty} \iint K_M(z,\zeta)\,
d\tau_n(z)\,d\tau_n(\zeta)\right) \\ &\le \lim_{M\to\infty} \left(
\lim_{N\ni n\to\infty} \left( \frac{2}{n^2} \sum_{1\le j<k\le n} K_M(\xi_{j,n},\xi_{k,n}) + \frac{M}{n} \right) \right) \\ &\le
\lim_{M\to\infty} \left( \liminf_{N\ni n\to\infty} \frac{2}{n^2}
\sum_{1\le j<k\le n} g_G(\xi_{j,n},\xi_{k,n}) \right) \\ &= V_E^G,
\end{align*}
where we used \eqref{5.1} on the last step. Since the Green energy of a probability measure supported on $E$ attains its minimum $V_E^G$ only for the equilibrium measure $\mu_E^G,$ see Theorem 5.10 of \cite[p. 131]{ST}, we obtain that $\tau=\mu_E^G$. The latter argument holds for any subsequence $N\in\N,$ which means that \eqref{5.3} is also proved.
\end{proof}

\begin{proof}[of Theorem \ref{thm1.2}]
Since Green potentials are superharmonic in $G$, each potential $U_G^{\nu_k}$ attains its infimum on $E$ at a point $c_k \in \partial E$:
\[
\inf_E U_G^{\nu_k} = U_G^{\nu_k}(c_k), \quad k=1,\ldots,m.
\]
Let $\nu:= \sum_{k=1}^m \nu_k,$ so that $\nu$ is a unit measure with  potential
\[
U_G^{\nu}(z) = \sum_{k=1}^m U_G^{\nu_k}(z).
\]
Using Theorem \ref{thm1.1} and Fubini's theorem, we obtain that
\begin{align*}
\sum_{k=1}^m \inf_E U_G^{\nu_k} &= \sum_{k=1}^m U_G^{\nu_k}(c_k) = \sum_{k =1}^m \int g_G(c_k,\zeta)\,d\nu_k(\zeta) \geq \int \inf_{z\in E} g_G(z,\zeta)\,d\nu(\zeta) \\
&= \int \int g_G(z,\zeta)\,d\sigma_E^G(z)\,d\nu(\zeta) = \int U_G^{\nu}(z)\, d\sigma_E^G(z).
\end{align*}
We now apply Lemma \ref{GBW} to estimate $U_G^{\nu}$ in $G$, which gives
\begin{align*}
\sum_{k=1}^m \inf_E U_G^{\nu_k} &\geq \int \left(\inf_E U_G^{\nu} + U_G^{\mu_E^G}(z) - V_E^G\right)\,d\sigma_E^G(z) \\ &=
\sigma_E^G(G)\,\inf_E U_G^{\nu} + \int \left(U_G^{\mu_E^G}(z) - V_E^G\right)\,d\sigma_E^G(z).
\end{align*}
The latter inequality proves \eqref{1.6} with constant $C_E^G$ given in \eqref{1.8}. We bring $C_E^G$ to the form of \eqref{1.7} by using
Fubini's theorem  and Theorem \ref{thm1.1} again:
\begin{align*}
\int \left(U_G^{\mu_E^G}(z) - V_E^G\right)\,d\sigma_E^G(z) &= \int U_G^{\sigma_E^G}(\zeta)\,d\mu_E^G(\zeta) - \sigma_E^G(G)\,V_E^G \\ &= \int \inf_{z\in E} g_G(z,\zeta)\, d\mu_E^G(\zeta) - V_E^G\,\sigma_E^G(G).
\end{align*}

It remains to prove the sharpness of $C_E^G$. We use the $n$th Fekete points $\{\xi_{k,n}\}_{k=1}^n\subset E$ for this purpose. Let
\[
\nu_{k,n}=\frac{1}{n}\delta_{\xi_{k,n}},
\]
so that
\[
\sum_{k=1}^n \nu_{k,n} = \tau_n
\]
is a positive unit measure supported on $E$. The left hand side of \eqref{1.6} may be written as follows:
\begin{align*}
\sum_{k=1}^n \inf_E U_G^{\nu_{k,n}} = \frac{1}{n} \sum_{k=1}^n \inf_{z\in E} g_G(z,\xi_{k,n}) = \int \inf_{z\in E} g_G(z,\zeta)\,d\tau_n(\zeta).
\end{align*}
Since the function $-\log d_E^G(\zeta) = \inf_{z\in E} g_G(z,\zeta)$ is continuous in $G$, we obtain from \eqref{5.3} that
\[
\lim_{n\to\infty} \sum_{k=1}^n \inf_E U_G^{\nu_{k,n}} = \int \inf_{z\in E} g_G(z,\zeta)\,d\mu_E^G(\zeta).
\]
On the other hand, we have for the right hand side of \eqref{1.6} that
\[
\lim_{n\to\infty} \inf_E U_G^{\tau_n} = \lim_{n\to\infty} \inf_{z\in E} \frac{1}{n} \sum_{k=1}^n g_G(z,\xi_{k,n}) = V_E^G,
\]
by \eqref{5.2}. Hence \eqref{1.6} turns into equality when we pass to the limit as $n\to\infty.$
\end{proof}

\begin{proof}[of Corollary \ref{cor1.3}]
Recall that
\[
g_D(z,\zeta)=\log\frac{|1-\bar\zeta z|}{|z-\zeta|},\quad z,\zeta\in D.
\]
Hence we have
\[
\inf_{\zeta\in D_r} g_D(z,\zeta) = \log\frac{1+r|z|}{|z|+r},\quad z\in D,
\]
because the above infimum is attained at $\zeta_m=-rz/|z|.$ Theorem $1.22^\prime$ of \cite[p. 104]{La} and Theorem \ref{thm1.1} now give that
\[
d\sigma_{D_r}^D(z) = -\frac{1}{2\pi} \Delta \left(\log\frac{1+r|z|}{|z|+r}\right)\,dxdy,\quad z=x+iy\in D.
\]
Computing Laplacian, we obtain that
\begin{align*}
d\sigma_{D_r}^D(z) &= \frac{1}{2\pi} \left(\frac{r}{|z|(|z|+r)^2} - \frac{r}{|z|(r|z|+1)^2}\right)\,dxdy \\ &= \frac{r(1-r^2)(1-|z|^2)}{2\pi|z|(r|z|+1)^2(r+|z|)^2}\,dxdy,\quad z=x+iy\in D.
\end{align*}
Elementary integration using polar coordinates implies that
\[
\sigma_{D_r}^D(D_R) = \frac{R(1-r^2)}{(rR+1)(r+R)},\quad 0\le R<1,
\]
and \eqref{1.10} follows by letting $R\to 1.$ It is known \cite[p. 133]{ST} that $V_{D_r}^D = -\log r,$ and $d\mu_E^G(z)=d\theta/(2\pi),\ z=re^{i\theta}.$ Hence we immediately obtain \eqref{1.11} from \eqref{1.7}.
\end{proof}

\begin{proof}[of Theorem \ref{thm2.1}]
Suppose that the Blaschke products $B_k$ have the following form
\[
B_k(z)=e^{i\theta_k} \prod_{j=1}^{n_k} \frac{z-z_{j,k}}{1-\bar z_{j,k}\, z}, \quad k=1,\ldots,m.
\]
We define the measures
\[
\nu_k := \frac{1}{n} \sum_{j=1}^{n_k} \delta_{z_{j,k}}, \quad k=1,\ldots,m,
\]
and observe that
\[
U_G^{\nu_k}(z) = -\frac{1}{n} \log |B_k(z)|, \quad k=1,\ldots,m.
\]
Note that the total mass of $\sum_{k=1}^m \nu_k$ is equal to 1, so that we can apply Theorem \ref{thm1.2} here. Since
\[
\inf_E U_G^{\nu_k} = -\frac{1}{n} \log \|B_k\|_E, \quad k=1,\ldots,m,
\]
\eqref{2.1} is a direct consequence of \eqref{1.6}. Using the explicit form of Green function
\[
g_D(z,\zeta)=\log\frac{|1-\bar\zeta z|}{|z-\zeta|},\quad z,\zeta\in D,
\]
we obtain \eqref{2.2} from \eqref{1.7}. In order to prove that $C_E^D$ cannot be replaced by a smaller constant, one should simply repeat the corresponding part of proof of Theorem \ref{thm1.2}.
\end{proof}

\begin{proof}[of Corollary \ref{cor2.2}]
This result is an easy combination of Theorem \ref{thm2.1} and Corollary \ref{cor1.3}.
\end{proof}

We need the following version of Lower Envelope Theorem for Green's potentials in the sequel.
\begin{lemma} \label{LowEnv}
Let $\mu_n,\ n\in\N,$ be a sequence of positive unit Borel measures that are supported in a fixed compact subset of a bounded domain $G$. If $\mu_n\stackrel{*}{\rightarrow}\mu$ then
\[
\liminf_{n\to\infty} U_G^{\mu_n}(z) = U_G^{\mu}(z)
\]
for quasi every $z\in G.$
\end{lemma}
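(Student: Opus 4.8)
The plan is to reduce the statement to the classical Lower Envelope Theorem for logarithmic potentials, by separating off the regular part of the Green function. Since $G$ is bounded, for each fixed $w\in G$ the function $g_G(\cdot,w)$ is positive and harmonic on $G\setminus\{w\}$, and $g_G(z,w)+\log|z-w|$ stays bounded as $z\to w$; hence
\[
H(z,w):=g_G(z,w)+\log|z-w|
\]
extends to a harmonic function of $z$ on all of $G$, and $H$ is symmetric, $H(z,w)=H(w,z)$ (see \cite{AG}, \cite{Ra}). In particular, using symmetry, for each fixed $z\in G$ the map $w\mapsto H(z,w)$ is harmonic, hence continuous, on $G$. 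Writing $g_G(z,\zeta)=\log(1/|z-\zeta|)+H(z,\zeta)$ and integrating against $\mu_n$ and against $\mu$, I get
\[
U_G^{\mu_n}(z)=p_n(z)+W_n(z),\qquad U_G^{\mu}(z)=p(z)+W(z),
\]
where $p_n(z):=\int\log(1/|z-\zeta|)\,d\mu_n(\zeta)$ and $p(z):=\int\log(1/|z-\zeta|)\,d\mu(\zeta)$ are the logarithmic potentials of $\mu_n$ and $\mu$, while $W_n(z):=\int H(z,\zeta)\,d\mu_n(\zeta)$ and $W(z):=\int H(z,\zeta)\,d\mu(\zeta)$.

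Next I would check that the regular parts converge pointwise. Let $K\subset G$ be a compact set carrying the supports of all $\mu_n$ and of $\mu$. For a fixed $z\in G$ the function $H(z,\cdot)$ is continuous on the compact set $K$, so $\mu_n\stackrel{*}{\rightarrow}\mu$ forces $W_n(z)\to W(z)$. Since adding a convergent sequence does not affect a lower limit,
\[
\liminf_{n\to\infty}U_G^{\mu_n}(z)=W(z)+\liminf_{n\to\infty}p_n(z),\qquad z\in G.
\]
Now the classical Lower Envelope Theorem for logarithmic potentials of probability measures with supports in a fixed compact set (see \cite{ST}, \cite{La}) gives $\liminf_{n\to\infty}p_n(z)=p(z)$ for quasi every $z\in\C$, hence for quasi every $z\in G$. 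Combining the last two displays,
\[
\liminf_{n\to\infty}U_G^{\mu_n}(z)=W(z)+p(z)=U_G^{\mu}(z)\qquad\text{for q.e. }z\in G,
\]
which is the assertion.

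The ingredients here are the two quoted facts --- the decomposition $g_G(z,w)=\log(1/|z-w|)+H(z,w)$ with $H$ symmetric and harmonic in each variable, and the logarithmic Lower Envelope Theorem --- together with the elementary identity $\liminf_n(a_n+b_n)=b+\liminf_n a_n$ valid when $b_n\to b$; I do not anticipate a genuine obstacle, and the role of the hypothesis that $G$ be bounded is exactly to guarantee that $g_G$ carries the standard logarithmic singularity, so that $H$ is locally bounded on $G\times G$ and this splitting is legitimate. A self-contained route, bypassing the logarithmic theorem, would run as follows: the inequality $\liminf_{n\to\infty}U_G^{\mu_n}(z)\ge U_G^{\mu}(z)$ holds for \emph{every} $z\in G$ by the principle of descent (apply weak* convergence to the continuous bounded kernels $\min(g_G,M)$ and let $M\to\infty$, as in the proof of Lemma \ref{Fe}), while the reverse inequality holds q.e. by integrating $\liminf_{n\to\infty}U_G^{\mu_n}-U_G^{\mu}$ against the Green equilibrium measure $\mu_F^G$ of a regular compact subset $F$ of the set where this difference would be positive, using Fatou's lemma and Fubini's theorem. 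On that route the one delicate point is the weak* passage $\int U_G^{\mu_F^G}\,d\mu_n\to\int U_G^{\mu_F^G}\,d\mu$, which requires continuity of the Green equilibrium potential of $F$, and hence the regularity of $F$ --- this is why one selects $F$ regular.
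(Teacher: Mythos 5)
Your argument is correct, but it follows a genuinely different route from the paper's. The paper decomposes the potential rather than the kernel: it writes $U_G^{\nu}(z)=U^{\nu}(z)-U^{\hat\nu}(z)$ for $z\in G$, where $\hat\nu$ is the balayage of $\nu$ onto $\partial G$ (Theorem 5.1 of \cite{ST}), observes that $\mu_n\stackrel{*}{\rightarrow}\mu$ forces $\hat\mu_n\stackrel{*}{\rightarrow}\hat\mu$, so that the logarithmic potentials $U^{\hat\mu_n}(z)$ converge to $U^{\hat\mu}(z)$ at every interior point $z$ because these measures sit on $\partial G$, and then invokes the Lower Envelope Theorem for logarithmic potentials. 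You instead split the kernel itself, $g_G(z,\zeta)=\log\frac{1}{|z-\zeta|}+H(z,\zeta)$, with $H$ harmonic in each variable by the removable-singularity argument and symmetry of $g_G$, so that the regular parts $W_n(z)=\int H(z,\zeta)\,d\mu_n(\zeta)$ converge pointwise by weak* convergence against the continuous function $H(z,\cdot)$ on the common compact carrier $K$, and then finish with the same logarithmic Lower Envelope Theorem. Both proofs are reductions to that classical theorem; yours avoids balayage altogether and, in particular, avoids the weak*-continuity of balayage which the paper asserts without detail, at the price of using the standard decomposition and symmetry of the Green function, which require $G$ bounded (or at least $\infty\notin G$), exactly as in the hypothesis. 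One small point worth making explicit: since weak* convergence is tested against functions continuous on all of $\overline{G}$ (or compactly supported in $G$), to conclude $W_n(z)\to W(z)$ you should note that $H(z,\cdot)\big|_K$ can be replaced by a continuous compactly supported extension agreeing on $K$; this is immediate with a cutoff. Your alternative self-contained sketch (principle of descent for one inequality, an equilibrium-measure/Fatou argument for the reverse) essentially reproduces the proof of the Lower Envelope Theorem itself and is not needed once the reduction is in place.
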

\begin{proof} It is known that the Green potential of any measure $\nu$ supported on $G$ can be expressed via the regular logarithmic potentials of $\nu$ and its balayage $\hat\nu$ from $G$ onto $\partial G$:
\[
U_G^{\nu}(z) = U^{\nu}(z) - U^{\hat\nu}(z), \quad z\in G,
\]
see Theorem 5.1 in \cite[p. 124]{ST}. The properties of balayage immediately show that $\mu_n\stackrel{*}{\rightarrow}\mu$ implies $\hat\mu_n\stackrel{*}{\rightarrow}\hat\mu$. Since $\supp\,\hat\mu_n \subset \partial G$ and $\supp\,\hat\mu \subset \partial G$, we have that
\[
\lim_{n\to\infty} U^{\hat\mu_n}(z) = U^{\hat\mu}(z),\quad z\in G.
\]
Thus the result of this lemma follows from the Lower Envelope Theorem for logarithmic potentials (see Theorem 6.9 in \cite[p. 73]{ST}), stating that
\[
\liminf_{n\to\infty} U^{\mu_n}(z) = U^{\mu}(z)
\]
for quasi every $z\in\C.$
\end{proof}

\begin{proof}[of Theorem \ref{thm2.3}]
Let $z_{j,l},\ j=1,\ldots,n_l,$ be all zeros of the product $\prod_{k=1}^{m_l} B_k$ listed according to multiplicities. Define the individual Blaschke factors
\[
b_{j,l}(z)=\frac{z-z_{j,l}}{1-\bar z_{j,l}\, z}, \quad j=1,\ldots,n_l.
\]
We first note that \eqref{2.4} implies
\begin{align} \label{5.4}
\lim_{n_l\to\infty} \left( \frac{\prod_{j=1}^{n_l} \| b_{j,l} \|_E}{\left\| \prod_{j=1}^{n_l} b_{j,l} \right\|_E^{\sigma_E^D(D)}} \right)^{1/n_l} = e^{-C_E^D}.
\end{align}
Indeed, we have that
\[
\prod_{k=1}^{m_l} \| B_{k,l} \|_E \le \prod_{j=1}^{n_l} \| b_{j,l} \|_E \le e^{-C_E^D} \left\| \prod_{j=1}^{n_l} b_{j,l} \right\|_E^{\sigma_E^D(D)}
\]
by Theorem \ref{thm2.1}. On the other hand,
\[
\left\| \prod_{j=1}^{n_l} b_{j,l} \right\|_E = \left\| \prod_{k=1}^{m_l} B_{k,l} \right\|_E,
\]
so that \eqref{5.4} follows. Passing to a subsequence, we can assume that
\begin{align*}
\lim_{n_l\to\infty} \left\| \prod_{j=1}^{n_l} b_{j,l} \right\|_E^{1/n_l} = C
\end{align*}
and $\tau_{n_l} \stackrel{*}{\rightarrow} \tau$ as $n_l\to\infty$ hold simultaneously. It is clear that $\tau$ is a unit measure with support in $\overline{D}.$ Since $\log d_E^D(z)$ is continuous, we also have that
\begin{align*}
\lim_{n_l\to\infty} \log\left( \prod_{j=1}^{n_l} \| b_{j,l} \|_E \right)^{1/n_l} = \int \log d_E^D(z)\,d\tau(z) = -\int \inf_{\zeta\in E} g_D(z,\zeta)\,d\tau(z).
\end{align*}
Hence \eqref{5.4} gives that
\begin{align*}
\int \inf_{\zeta\in E} g_D(z,\zeta)\,d\tau(z) + \sigma_E^D(D) \log C = C_E^D.
\end{align*}
Using \eqref{1.4} and Fubini's theorem, we obtain that the left hand side of the above equation may be written as
\begin{align*}
\iint g_D(z,\zeta)\,d\sigma_E^D(\zeta)\,d\tau(z) + \sigma_E^D(D) \log C = \int \left( U_G^{\tau}(\zeta) + \log{C} \right)\,d\sigma_E^D(\zeta).
\end{align*}
Transforming the representation \eqref{2.2} for $C_E^D$ is a similar way, we have that
\begin{align} \label{5.5}
\int \left( U_G^{\tau}(\zeta) + \log{C} \right)\,d\sigma_E^D(\zeta) = \int \left( U_G^{\mu_E^D}(\zeta) - V_E^D \right)\,d\sigma_E^D(\zeta).
\end{align}
On the other hand, Lemma \ref{GBW} implies that
\begin{align*}
U_G^{\tau_{n_l}}(z) - \inf_E U_G^{\tau_{n_l}} \ge U_G^{\mu_E^D}(z) - V_E^D, \quad z\in D.
\end{align*}
Note that
\begin{align*}
- \inf_E U_G^{\tau_{n_l}} = \log \left\| \prod_{j=1}^{n_l} b_{j,l} \right\|_E^{1/n_l}.
\end{align*}
Passing to $\liminf$ as $n_l\to\infty$ and using Lemma \ref{LowEnv}, we obtain that
\begin{align} \label{5.6}
U_G^{\tau}(z) + \log C \ge U_G^{\mu_E^D}(z) - V_E^D
\end{align}
holds q.e. in $D$. The Principle of Domination (cf. Theorem 5.8 in \cite[p.130]{ST}) now implies that \eqref{5.6} holds for all $z\in D.$ Furthermore, strict inequality in \eqref{5.6} for any $z\in\supp\,\sigma_E^D$ would violate \eqref{5.5}. Hence
\begin{align} \label{5.7}
U_G^{\tau}(z) + \log C = U_G^{\mu_E^D}(z) - V_E^D, \quad z\in\supp\,\sigma_E^D.
\end{align}
Since $\supp\,\sigma_E^D$ has points of accumulation on $\partial D$ by Theorem \ref{thm1.1}, and since the limiting values of Green potentials on $\partial D$ are zero, we obtain by letting $|z|\to 1$ in \eqref{5.7} that
\[
\log C = - V_E^D.
\]
Thus \eqref{2.5} is proved, and \eqref{5.6}-\eqref{5.7} take the following form:
\begin{align} \label{5.8}
U_G^{\tau}(z) \ge U_G^{\mu_E^D}(z), \ z\in D, \quad\mbox{and}\quad U_G^{\tau}(z) = U_G^{\mu_E^D}(z), \ z\in\supp\,\sigma_E^D.
\end{align}
Let $\Omega_E$ be the connected component of $D\setminus E$ whose boundary contains the unit circumference. The function $u(z):=U_G^{\tau}(z) - U_G^{\mu_E^D}(z)$ is superharmonic in $\Omega_E,$ because $U_G^{\mu_E^D}(z)$ is harmonic in $D\setminus \partial E.$ We have that $u(z)\ge 0,\ z\in D,$ and $u(z)=0,\ z\in\supp\,\sigma_E^D,$ by \eqref{5.8}. It follows that $u$ attains its minimum in $\Omega_E$, as $\supp\,\sigma_E^D\cap\Omega_E \neq\emptyset$ by Theorem \ref{thm1.1}, so that $u(z)=0,\ z\in\Omega_E,$ by the Minimum Principle. We conclude that
\begin{align*}
U_G^{\tau}(z) = U_G^{\mu_E^D}(z), \ z\in\Omega_E,
\end{align*}
and that this equality also holds on $\partial\Omega_E$ by the continuity of potentials in the fine topology, see Corollary 5.6 in \cite[p. 61]{ST}. If $E$ satisfies conditions of Theorem \ref{thm2.3}(i), then $\partial\Omega_E=E$ and
\begin{align*}
U_G^{\tau}(z) = U_G^{\mu_E^D}(z), \ z\in D,
\end{align*}
because $D=E\cup\Omega_E$. Consequently, $\supp\,\tau \subset \supp\,\mu_E \subset \partial E.$  Integrating the above equation with respect to $\tau$, we obtain by \eqref{1.2} that
\[
\iint g_D(z,\zeta)\,d\tau(\zeta)\,d\tau(z) = \int U_G^{\tau}(z)\,d\tau(z) \le V_E^D,
\]
which means that the Green energy of $\tau$ attains the smallest possible value among all positive unit Borel measures supported on $E$. Hence $\tau=\mu_E^D$ by the uniqueness of Green equilibrium measure, see Theorems 5.10 and 5.11 in \cite[pp. 131-132]{ST}.

To prove that $\tau=\mu_E^D$ when $E=\overline{H},$ where $H$ is a Jordan domain, we follow the same argument to show that $u(z)=0,\ z\in H,$ because $u$ is a superharmonic function that attains its minimum in $H$ for $z\in\supp\,\sigma_E^D\cap H \neq\emptyset$, cf. Theorem \ref{thm1.1}. We also use here that $\supp\,\mu_E^D = \partial E = \partial\Omega_E = \partial H.$ Consequently, $U_G^{\tau}(z) = U_G^{\mu_E^D}(z)$ in $H$ and everywhere in $D=\overline{\Omega}_E \cup H.$ Using the same energy argument as before, we obtain that $\tau=\mu_E^D$ under the assumption of Theorem \ref{thm2.3}(ii).
\end{proof}

\begin{proof}[of Theorem \ref{thm2.4}]
We use \eqref{1.4} again to find that
\begin{align*}
\log \left\| B_{k,n} \right\|_E = - \inf_{\zeta\in E} g_D(z_{k,n},\zeta) = \log d_E^D(z_{k,n}), \quad k=1,\ldots,n.
\end{align*}
Since $\log d_E^D$ is continuous and $\tau_n \stackrel{*}{\rightarrow} \mu_E^D,$ we obtain that
\begin{align} \label{5.9}
\lim_{n\to\infty} \log\left( \prod_{k=1}^{n} \| B_{k,n} \|_E \right)^{1/n} = \lim_{n\to\infty} \int \log d_E^D(z)\,d\tau_n(z) = -\int \inf_{\zeta\in E} g_D(z,\zeta)\,d\mu_E^D(z).
\end{align}
On the other hand, we have that
\begin{align*}
\log \left\| \prod_{k=1}^{n} B_{k,n} \right\|_E ^{1/n} =  - \inf_E U_G^{\tau_n}.
\end{align*}
Using lower semicontinuity of $U_G^{\tau_n}$, we conclude that the infimum is attained at a point $z_n\in E.$ We can assume that $\lim_{n\to\infty} z_n = z_0 \in E,$ after passing to a subsequence. The Principle of Descent (see Theorem 6.8 of \cite[p. 70]{ST} and Theorem 1.3 of \cite[p. 62]{La}) implies that
\begin{align*}
\liminf_{n\to\infty} U_G^{\tau_n}(z_n) \ge U_G^{\mu_E^D}(z_0) = V_E^D,
\end{align*}
where the last equality holds by \eqref{1.2} and regularity of $E.$ Hence
\begin{align*}
\limsup_{n\to\infty} \log \left\| \prod_{k=1}^{n} B_{k,n} \right\|_E ^{1/n} \le - V_E^D,
\end{align*}
and we obtain from \eqref{5.9} that
\begin{align*}
\liminf_{n\to\infty} \left( \frac{\prod_{k=1}^n \| B_{k,n} \|_E}{\left\| \prod_{k=1}^n B_{k,n} \right\|_E^{\sigma_E^D(D)}} \right)^{1/n} \ge e^{-C_E^D}.
\end{align*}
But
\begin{align*}
\limsup_{n\to\infty} \left( \frac{\prod_{k=1}^n \| B_{k,n} \|_E}{\left\| \prod_{k=1}^n B_{k,n} \right\|_E^{\sigma_E^D(D)}} \right)^{1/n} \le e^{-C_E^D}
\end{align*}
by \eqref{2.1}, so that \eqref{2.6} follows.
\end{proof}

\affiliationone{
   Department of Mathematics\\
   Oklahoma State University\\
   Stillwater, OK 74078\\
   U.S.A.
\email{igor@math.okstate.edu}}

\end{document}